\DeclareMathOperator{\id}{id}
\DeclareMathOperator{\cd}{cd}
\DeclareMathOperator{\Hom}{Hom}
\DeclareMathOperator{\Ext}{Ext}
\DeclareMathOperator{\im}{im}
\newcommand{\iso}{\ensuremath{\cong}}
\newcommand{\Z}[1][]{\ensuremath{\mathbb{Z}_{#1}}}
\newcommand{\N}{\ensuremath{\mathbb{N}}}
\newcommand{\F}{\ensuremath{\mathbb{F}}}
\newcommand{\nsgp}[1][]{\ensuremath{\triangleleft_{#1}}}
\newcommand{\gp}[1]{\ensuremath{\langle #1\rangle}}
\newcommand{\famS}[0]{\ensuremath{\mathcal{S}}}
\newcommand{\famT}[0]{\ensuremath{\mathcal{T}}}
\newcommand{\Zpiof}[1]{{\ensuremath{\Z[{\pi}][\![#1]\!]}}}
\newcommand{\ZGS}{\ensuremath{\Z[\pi][\![G/\famS]\!]}}
\newcommand{\lqt}{\backslash}
\newtheorem{theorem}{Theorem}[section]
\newtheorem{prop}[theorem]{Proposition}
\newtheorem{lem}[theorem]{Lemma}
\newtheorem{clly}[theorem]{Corollary}
\theoremstyle{definition}
\newtheorem{defn}[theorem]{Definition}
\newtheorem*{cnv}{Conventions}
\theoremstyle{remark}
\newtheorem*{rmk}{Remark}
\newtheorem{example}[theorem]{Example}
\theoremstyle{plain}
\newcounter{introthmcount}
\newenvironment{introthm}[1]{\\[1.9ex] {\bf Theorem \ref{#1}}. \em}{\em \vspace{2ex}}
\theoremstyle{definition}
\DeclareMathOperator{\inc}{in}
\DeclareMathOperator{\ev}{ev}
\newcommand{\cC}[0]{\ensuremath{\mathcal{C}}}
\newcommand{\cD}[0]{\ensuremath{\mathcal{D}}}
\numberwithin{equation}{section}
\title{Relative extensions and cohomology of profinite groups}
\author{Gareth Wilkes}
\begin{document}
\maketitle

\begin{abstract}
We construct a correspondence between the cohomology groups of a group $G$ relative to a family of subgroups $\famS$ and the classes of `relative extensions' of $G$ by abelian groups, modulo a certain equivalence relation. We establish this correspondence for both discrete and profinite group pairs. We go on to discuss the relationships of profinite group pairs of cohomological dimension one with free products and projective group pairs.
\end{abstract}

\section{Introduction}

One of the most important applications of group cohomology that a new student will meet, and the example often used to convince such a student that $H^2(G)$ actually `means' something, is the use of the second cohomology group $H^2(G;A)$ to classify equivalence classes of extensions of $G$ by $A$. With some mild modifications to acknowledge the topology, a similar correspondence holds for extensions of profinite groups by a finite abelian group \cite[Section 6.8]{RZ00}. 

The relationship of $H^2$ to extension classes is especially important for profinite groups. Here it leads immediately to a characterisation of projective profinite groups as groups of cohomological dimension one \cite[Sections 7.5--7.6]{RZ00}. For pro-$p$ groups this yields a proof that groups of cohomological dimension one are free, a parallel to the famous Stallings-Swan theorem on discrete groups of dimension one \cite{Stallings68, Swan69}.

Rather less well-known than the standard cohomology groups are the relative cohomology groups $H^n(G,\famS)$, where $\famS = \{S_i\}_{i\in I}$ is some family of (not necessarily distinct) subgroups of $G$. Such cohomology groups have been defined both for discrete groups and, more recently, for profinite groups (under some restrictions on the family of subgroups). In this paper we will study the connection of the relative cohomology group $H^2(G,\famS; A)$ with some notion of an extension of $G$ `relative to \famS'.

We include first a full discussion for discrete groups. It seems possible that the analysis for discrete groups is already known, but the author was unable to find any record in the literature of a relationship between $H^2(G,\famS;A)$ and extensions.   

There are a few different notions of `extension relative to a collection of subgroups' one could conceive, and a few different equivalence relations one could choose to impose. The `correct' definition is given in Definition \ref{def:RelExt}. For this definition, a {\em relative extension} of $G$ by $A$ is an extension $E$ of $G$ by $A$ together with a choice of splitting homomorphism $S\to E$ for each peripheral subgroup $S$. 

That this notion is a reasonable one may be immediately seen from the long exact sequence of relative homology, of which a part is:
\[\prod_{i\in I} H^1(S_i;A) \to H^2(G,\famS; A) \to H^2(G;A) \to \prod_{i\in I} H^2(S_i; A).\]
We see that some portion of $H^2(G,\famS; A)$ comes from the kernel of the map $H^2(G;A) \to \prod H^2(S_i; A)$---that is, extensions of $G$ by $A$ which are trivial over each $S_i$. The rest of $H^2(G,\famS; A)$ arises from the groups $H^1(S_i;A)$, which parametrize the different splittings of the groups $A\rtimes S_i$. 

Having made the right definitions, we turn the data of such a relative extension into a certain cochain specifying an element of $H^2(G,\famS;A)$, and vice versa. This will establish the following theorem.
\begin{introthm}{thm:discretecase}
Let $(G,\famS)$ be a group pair and let $A$ be a $G$-module. There is a natural bijection between $H^2(G,\famS;A)$ and the equivalence classes of relative extensions of $(G,\famS)$ by $A$.
\end{introthm}

Having proved this theorem for discrete groups, we turn our attention to ground which is certainly unbroken: profinite groups. One would hope a similar theorem holds---provided one restricts to extensions with a finite kernel $A$, a standard concession in this world. The modifications to account for topology are no longer `mild': there are substantive obstructions to carrying out the proof verbatim, as discussed in Section \ref{sec:ProfiniteObstructions}. In particular, the necessity of ensuring all our maps are continuous at every stage introduces new complexities into the proof. We resolve these difficulties in Section \ref{sec:ProfiniteRelExts}, and produce the hoped-for theorem.
\begin{introthm}{thm:profinitecase}
Let $(G,\famS)$ be a profinite group pair and let $A$ be a finite $G$-module. There is a natural bijection between $H^2(G,\famS;A)$ and the equivalence classes of relative extensions of $(G,\famS)$ by $A$.
\end{introthm}

To round out the paper, we connect our relative cohomology groups to the study of projective pairs. This material is closely related to work of Ribes \cite{Ribes74} and Haran \cite{Haran87} and to new independent work released shortly before this paper by Haran--Zalesskii \cite{HZ22}. The author also notes that a homological connection will be explored by Zalesskii in a paper currently in preparation \cite{Zal22up}. The author is grateful to Prof.~Zalesskii for forewarning of this paper, and for other useful discussions.

Each of these investigations has as a goal some theorem such as the following.
\begin{introthm}{thm:cd1isproj}
Let $(G,\famS)$ be a pro-$p$ group pair, where $\famS=\{S_x\}_{x\in X}$ is a family of subgroups continuously indexed over the profinite set $X$. The following are equivalent.
\begin{enumerate}[(I)]
\item $\cd_p(G,\famS)\leq 1$.
\item $(G,\famS)$ is a projective profinite pair.
\item $G= \coprod_{x\in X} S_x \amalg F$ is a free profinite product of the family \famS\ with a free pro-$p$ group $F$.
\end{enumerate}\end{introthm}
The new part of this theorem is the involvement of relative cohomology, which provides a rather more concise and natural condition than the more complicated statements of \cite{Ribes74}. An analogous result for discrete groups, under the imposition of extra conditions, was proved by Wall \cite{Wall71}.

The outline of the paper is as follows. In Section \ref{sec:Background} we will recall some definitions and  notation pertaining to profinite modules and relative cohomology. Section \ref{sec:ChainHomotopies} will set up the homological algebra framework: chain maps, homotopies, and homotopies-between-homotopies. 

In Section \ref{sec:RelExtsCohom} we will establish the equivalence, for discrete groups, of second relative cohomology and certain equivalence classes of extensions. Subsequently, Section \ref{sec:TheProfiniteCase} will describe the adaptations necessary to perform this programme for profinite group pairs.

Finally, Section \ref{sec:Projective} will discuss some connections and applications to projective profinite pairs and the Kurosh Subgroup Theorem.

\begin{cnv}
There is a considerable amount of notation in this paper, and particularly in the middle sections our alphabets are somewhat strained for room. We will attempt to keep things in order with the following guidelines.
\begin{itemize}
\item $A$ will be an abelian group and $G$-module.
\item $C_\bullet$, $D_\bullet$ and variants thereof will denote chain complexes. The boundary maps in these chain complexes will be $d_\bullet$, without a decoration to distinguish which chain complex they belong to. Cochain complexes and coboundary maps will be denoted with $C^\bullet$, $D^\bullet$, $d^\bullet$ etc.
\item $E,F,G,H$ will denote groups. $\famS$ will be a family of subgroups of $G$.
\item Especially in Sections \ref{sec:RelExtsCohom} and \ref{sec:TheProfiniteCase}, the letters $I_\bullet$, $J_\bullet$, $L_\bullet$, $M_\bullet$, $N_\bullet$ will denote various chain maps and chain homotopies which will play the roles specified in Section \ref{sec:ChainHomotopies}.
\item Lower case Greek letters will be used in Sections \ref{sec:RelExtsCohom} and \ref{sec:TheProfiniteCase} for various cochains. Lower case Roman letters $p,q,r$ will be used here for functions between groups.

Once the cochains have disappeared from the discussion in Section \ref{sec:Projective}, we will repurpose the Greek letters for use as maps between groups.
\item Dual maps will be denoted with a star, e.g.\ $I_\bullet^\ast$.
\item For $p$ a prime, $\F_p$ denotes a finite field with $p$ elements. The ring of $p$-adic integers will be denoted \Z[p]. If $\pi$ is a set of primes then \Z[\pi] denotes the ring of $\pi$-adic integers $\Z[\pi] = \prod_{p\in \pi}\Z[p]$.
\end{itemize}
\end{cnv}

\section{Background}\label{sec:Background}

In this section we recall certain background concepts, mainly for the purpose of setting up notation.

\subsection{Profinite modules}

Given a profinite set $X$ and a profinite ring $R$ we denote  by $R[\![X]\!]$ the free profinite $R$-module on the set $X$, defined by the familiar universal property: maps out of the free module are uniquely determined by a continuous function on $X$. See \cite[Section 5.2]{RZ00}. If $X$ is finite then this agrees with the abstract free module $R[X]$. 

In particular, if $G$ is a profinite group, we may form the completed group ring \Zpiof{G} and the free profinite $G$-module $\Zpiof{G}[\![X]\!]$, where $\pi$ is some non-empty set of primes. See \cite[Sections 5.3--5.7]{RZ00}. When $X$ admits a continuous $G$-action, the abelian group \Zpiof{X} becomes a \Zpiof{G}-module in a natural way.

Given a finite collection of profinite $G$-modules $\{M_i\}$, the direct sum $\bigoplus_i M_i$ is also a profinite $G$-module. However, when the family is infinite, the direct sum will no longer be compact and we must use a profinite direct sum. Background material on this operation may be found in \cite[Section 9.1]{Ribes17}\footnote{This section actually refers the reader back to \cite[Section 5.1]{Ribes17}, where the very similar theory of free profinite products is established. Hence the inclusion of a second reference to specifically deal with sums of modules.} and \cite[Appendix A]{WilkesRelCoh}. We include an abbreviated definition here, but for the purposes of this paper it will suffice to keep in mind that most properties of profinite direct sums mimic abstract direct sums.

Let $\mu\colon {\cal M}\to X$ be a surjection of profinite spaces, such that every $M_x = \mu^{-1}(x)$ is a profinite $R$-module. Assume that the operations of these $R$-modules are defined continuously\footnote{Precise definition: \cite[Definition A.1]{WilkesRelCoh}.} with respect to $x$. Such an object may be called a {\em sheaf of $R$-modules}. 

Let $\cal U$ be the set of all finite index submodules $U$ of $A= \bigoplus_{x\in X} M_x$ such that the composition ${\cal M}\to A \to  A/U$ is continuous. We then define the profinite direct sum of the modules $M_x$ to be the inverse limit
\[\bigboxplus_{x\in X} M_x = \varprojlim_{U\in \cal U} A /U. \]
Observe that this object, a completion of the abstract direct sum, comes with a natural continuous function ${\cal M} \to \bigboxplus_x M_x$. The profinite sum then satisfies (\cite[Proposition A.3]{WilkesRelCoh}) the natural universal property: for any profinite $R$-module $N$, morphisms $\bigboxplus_x M_x \to N$ are uniquely determined by a continuous function ${\cal M}\to N$ which restricts to a morphism of $R$-modules $M_x\to N$ for each $x$. When $X$ is finite, this object coincides with the usual direct sum $\bigoplus_x M_x$.

We note two properties in particular that will be of use in this paper. Suppose that $\alpha\colon Z \to X$ is a surjection of profinite spaces, and set $Z_x = \alpha^{-1}(x)$ for $x\in X$. Then the free modules $R[\![Z_x]\!]$ form a sheaf over $X$ in a natural way, and we have an isomorphism (cf \cite[Proposition A.14]{WilkesRelCoh})
\begin{equation} R[\![Z]\!] = \bigboxplus_{x\in X} R[\![Z_x]\!].\label{eq:profdirectsum}
\end{equation}
Suppose now we have a commuting diagram of profinite spaces
\[\begin{tikzcd} Z \ar{r} \ar[two heads]{d} & W \ar[two heads]{d} \\ 
X \ar{r} & Y \end{tikzcd}\]
where $Y$ is finite. From the above relation and the universal properties, we obtain a natural morphism
\begin{equation}
\begin{tikzcd} \bigboxplus_{x\in X} R[\![Z_x]\!] \ar{r}& \bigoplus_{y\in Y} R[\![W_y]\!].\end{tikzcd}\label{eq:MapsOfDirectSums}
\end{equation}

\subsection{Relative cohomology: discrete groups}

Relative homology and cohomology theory for discrete groups was developed by various authors including Auslander \cite{auslander1954relative}, Ribes \cite{Ribes69} and Takasu \cite{takasu1959relative}. As a good reference we recommend Bieri and Eckmann \cite{BE77}, and it is their notation that we will follow. We will not need extensive knowledge of results, but we will recall the notation and definitions.

Let $G$ be a group and let $\famS = \{S_i\}_{i\in I}$ be a family of (not necessarily distinct) subgroups of $G$ indexed by a set $I$. We denote by $\Z{}[G/\famS]$ the $G$-module
\[\Z{}[G/\famS] = \bigoplus_{i\in I} \Z{} [G/S_i].\]
We have the standard `augmentation' or `evaluation' map \[\ev\colon \Z{}[G/\famS]\to \Z \]
and denote the kernel of this map by $\Delta$ or $\Delta_{G/\famS}$. Calling the inclusion map $\inc$, we acquire an important short exact sequence
\begin{equation}
\begin{tikzcd}0 \ar{r}& \Delta_{G/\famS}\ar{r}{\inc} & \Z{}[G/\famS] \ar{r}{\ev} & \Z \ar{r}& 0. \end{tikzcd}\label{eq:DefnOfDeltaDiscrete}
\end{equation}
Observe that this sequence is split as a sequence of \Z-modules, and thus remains exact when a functor $\Hom(-,A)$ is applied\footnote{Note that this is the group of all homomorphisms, not just the $G$-linear homomorphisms $\Hom_G$}.

For a $G$-module $A$, we now define
\[H^n(G,\famS; A) = H^{n-1}(G; \Hom(\Delta, A)) = \Ext^{n-1}_G(\Z, \Hom(\Delta,A))\]
where $G$ acts diagonally on the homomorphism group $\Hom(\Delta,A)$. Note the shift in dimensions. 

\subsection{Relative cohomology: profinite groups}

Let us now more to the relative (co)homology of profinite group pairs. For this theory we refer to the author's paper \cite{WilkesRelCoh}, which itself builds on cohomology theories expounded in Serre \cite{Serre13}, Ribes--Zalesskii \cite{RZ00} and Symonds--Weigel \cite{SW00}. Related to cohomology theory, a notion of a Poincar\'e duality pair of profinite groups was defined by Kochloukova in \cite{kochloukova2014pro}.

The notation here mimics the discrete case, but the use of the symbol $\famS$ is subtly different. For discrete groups, \famS\ was simply a set of subgroups of $G$ indexed over some set $I$. For profinite groups, we have slightly more structure.

\begin{defn}\label{DefCtsIndex}
Let $G$ be a profinite group, and $\famS=\{S_x\}_{x\in X}$ a family of subgroups of $G$ indexed by a profinite space $X$. We say that \famS{} is {\em continuously indexed by $X$} if whenever $U$ is an open subset of $G$ the set \[\left\{x\in X \mid S_x\subseteq U\right\}\] is open in $X$. An equivalent definition is that the set
\[\famS = \left\{(g,x)\in G\times X\mid g\in S_x \right\} \]
is a closed subset of $G\times X$. 
\end{defn}

Note that we will use the symbol $\famS$ to denote the last set---that is, the subgroups $S_x$ `gathered together' over $X$---as well as to denote the abstract family $\famS=\{S_x\}_{x\in X}$. This slight conflation ought not to cause any confusion.

Let $G$ be a pro-$\cal C$ group for some variety $\cal C$ of finite groups, and let $\pi$ be some non-empty set of primes. Let $\famS$ be a family of subgroups of $G$ continuously indexed by $X$. The set 
\[G/\famS = \bigsqcup_{x\in X} G/S_x\]
acquires a natural topology as a quotient of the set $G\times X$, and the natural action of $G$ on this set is continuous \cite[Proposition 1.38]{WilkesRelCoh}. We may thus form the $G$-module \ZGS\ as the free \Z[\pi]-module on this space. We have an expression 
\[\ZGS = \bigboxplus_{x\in X} \Zpiof{G/S_x}\]
of \ZGS\ as a profinite direct sum \eqref{eq:profdirectsum}. 

We now have a short exact sequence to mimic Equation \eqref{eq:DefnOfDeltaDiscrete}:
\begin{equation}
\begin{tikzcd}0 \ar{r}& \Delta_{G/\famS}\ar{r}{\inc} & \ZGS \ar{r}{\ev} & \Z[\pi] \ar{r}& 0 \end{tikzcd}\label{eq:DefnOfDeltaProfinite}
\end{equation}
and will use the same notation $\Delta=\Delta_{G/\famS}$ for the kernel.

Take now a discrete $\pi$-primary $G$-module $A$. The space of continuous homomorphisms from a pro-$\pi$ abelian group to $A$ is itself a discrete $\pi$-primary $G$-module, in the domain of definition of profinite cohomology theory. We are thus permitted to make the definition
\[H^n(G,\famS; A) = H^{n-1}(G; \Hom(\Delta, A)) = \Ext^{n-1}_G(\Z[\pi], \Hom(\Delta,A))\]
for such a $G$-module $A$. Here, as in all cases involving a profinite group, $\Hom$-groups are deemed to be groups of continuous homomorphisms.

One last piece of notation we will use in this paper is the cohomological dimension. We will use the following as a definition, although strictly speaking it is a proposition.
\begin{defn}
Let $p\in \pi$. We write $\cd_p(G,\famS)\leq n$ if $H^{n+1}(G,\famS;A)=0$ for all $p$-primary modules $A$.
\end{defn}

\section{Choices of chain homotopies}\label{sec:ChainHomotopies}

Any student of homological algebra has seen the procedure of comparing projective resolutions with the use of chain maps, whose compositions are homotopic to the identity via a chain homotopy. In the circumstances of this paper, we will need to not only have the existence of such maps, but to verify that certain quantities are independent of {\em choice} of chain homotopy. We must therefore invoke the existence of `homotopies of homotopies', which one might term 2-homotopies.

Let $(\cC_\bullet, d_\bullet)$ and $(\cD_\bullet, d_\bullet)$ be two resolutions of the same object $Z$ in an abelian category. Assume that $\cC_\bullet$ is projective. Let $I_\bullet\colon \cD_\bullet \to \cC_\bullet$ be a chain map lifting $\id_Z$, and let $J_\bullet, \widetilde J_\bullet\colon \cC_\bullet \to \cD_\bullet$ be two choices of chain maps lifting $\id_Z$.

The difference between the chain maps $J_\bullet$ and $\widetilde J_\bullet$ is mediated by a chain homotopy $M_\bullet\colon \cC_\bullet \to \cD_{\bullet+1}$; that is, we have
\begin{equation} \widetilde J_n - J_n = M_{n-1}d_n + d_{n+1}M_n  \label{eq:DefnOfM}
\end{equation}
with the usual convention that $M_{-1}=0$. These are built in the usual inductive manner: if $M_0,\ldots, M_n$ have been constructed, then we note 
\begin{eqnarray*}
d_{n+1}(\widetilde J_{n+1} - J_{n+1} - M_nd_{n+1}) &=& (\widetilde J_n - J_n - d_{n+1}M_n) d_{n+1} \\
&=& M_{n-1}d_nd_{n+1} = 0, \end{eqnarray*}
and projectivity of $\cC_{n+1}$ gives a map $M_{n+1}$ with the required property.

\[\begin{tikzcd}
{} \ar{r} & \cC_{n+1} \ar{rr}{d_{n+1}} \ar{d}[swap]{J_{n+1}} \ar[shift left = 1ex]{d}{\widetilde J_{n+1}} & &\cC_n \ar{r}\ar[shift left = 1ex]{d}{\widetilde J_{n}} \ar{d}[swap]{J_{n}} \ar[dotted,red]{dll}[red]{M_n}& {}\\
{} \ar{r} & \cD_{n+1} \ar{rr}[swap]{d_{n+1}} && \cD_n \ar{r} & {}
\end{tikzcd}\]

Now, $I_\bullet J_\bullet$ and $I_\bullet \widetilde J_\bullet$ are both chain maps lifting $\id_Z$. We may take choices of chain homotopy $L_\bullet, \widetilde L_\bullet\colon \cC_\bullet \to \cC_{\bullet + 1}$ from $I_\bullet J_\bullet$ and $I_\bullet \widetilde J_\bullet$ to the identity, i.e.
\begin{equation} I_n J_n - \id = d_{n+1} L_n + L_{n-1} d_n  \label{eq:DefnOfL}
\end{equation} 
(and similar equations for $\widetilde J_\bullet$) for each $n$, where again $L_{-1}=0$. 

We claim that the difference between $L_\bullet$ and $\widetilde L_\bullet$ is mediated by a 2-homotopy $N_\bullet\colon \cC_\bullet \to \cC_{\bullet + 2}$ with the following property:
\[\widetilde L_n -L_n = I_{n+1} M_n + d_{n+2} N_n - N_{n-1} d_n\] 
for all $n$. Firstly, we have 
\begin{eqnarray*}
d_1(\widetilde L_0 - L_0) &=& (I_0 \widetilde J_0 -\id) - (I_0J_0 -\id)\\
&=& I_0d_1M_0 \\ &=& d_1I_1M_0.
\end{eqnarray*}
Therefore there exists $N_0\colon \cC_0\to \cC_2$ such that \[\widetilde L_0 - L_0 = I_1M_0 + d_2N_0.\]
Subsequently, suppose we have built $N_0,\ldots, N_n$ with the required property. Then 
\begin{eqnarray*}d_{n+2}\left( \widetilde L_{n+1} - L_{n+1} \right) &=& I_{n+1}\widetilde J_{n+1} - I_{n+1}J_{n+1} - (\widetilde L_n - L_n)d_{n+1}\\
&=& I_{n+1}\left(M_nd_{n+1} + d_{n+2} M_{n+1}\right) - (\widetilde L_n - L_n)d_{n+1}\\
&=& I_{n+1}d_{n+2}M_{n+1} - \left(d_{n+2}N_n + N_{n-1}d_n\right)d_{n+1}\\
&=& d_{n+2}\left(I_{n+2} M_{n+1} - N_nd_{n+1}\right)
\end{eqnarray*}
so by projectivity of $\cC_{n+1}$ there does indeed exist $N_{n+1}\colon \cC_{n+1}\to \cC_{n+3}$ such that 
\begin{equation} \widetilde L_{n+1} - L_{n+1} = I_{n+2} M_{n+1} - N_nd_{n+1} + d_{n+3}N_{n+1}\label{eq:DefOfN}
\end{equation} 
as required.

\[\begin{tikzcd}
{} \ar{r} & \cC_{n+2} \ar{rr}[]{d_{n+2}} \ar[gray]{dd}{} \ar[gray,shift left = 1ex]{dd}{} & & \cC_{n+1} \ar{rr}{d_{n+1}}\ar[gray]{dd}{} \ar[gray,shift left = 1ex]{dd}{} \ar[gray]{ddll}{} \ar[gray,shift left = 1ex]{ddll}{}  & &\cC_n \ar{r}\ar[shift left = 1ex]{dd}{I_n\widetilde J_{n}} \ar{dd}[swap]{I_nJ_{n}} \ar[shift left = 1ex]{ddll}[]{\widetilde L_n} \ar[]{ddll}[swap]{L_n} \ar[red]{ddllll}[swap]{N_n} & {}\\
&&&&&&\\
{} \ar{r} & \cC_{n+2} \ar{rr}[swap]{d_{n+2}} & {}& \cC_{n+1} \ar{rr}[swap]{d_{n+1}} &{} & \cC_n \ar{r} & {}
\end{tikzcd}\]

\section{Relative extensions and cohomology}\label{sec:RelExtsCohom}

In this section we shall set up the relationship between $H^2(G,\famS; A)$ and certain equivalence classes of extensions of $G$ by $A$ for a discrete group $G$.

\begin{defn}\label{def:RelExt}
Let $(G,\famS)$ be a group pair and let $A$ be a $G$-module. A {\em relative extension} of $(G,\famS)$ by $A$ is a pair $(E, (q_i))$ where \[0\to A\to E \stackrel{p}{\to} G\to 1\] is an extension of $G$ by $A$ and $(q_i)$ is a family of homomorphisms $q_i\colon S_i\to E$ for $i\in I$ such that $pq_i = \id_{S_i}$.

Two such relative extensions $(E,(q_i))$ and $(\widetilde E, (\tilde q_i))$ are declared to be {\em equivalent} if there is an equivalence of extensions $f\colon E \to \widetilde E$ such that for each $i$ the maps $fq_i$ and $\tilde q_i$ are {\em $A$-conjugate}: there exists $a_i\in A$ such that 
\[\tilde q_i(s)=a_i\cdot f\circ q_i(s)\cdot a_i^{-1}  \]
for all $s\in S_i$.

The {\em trivial relative extension} is the extension $E=A\rtimes G$ with $q_i(s) = (1,s)$ for all $s\in S_i$.
\end{defn}

Let us set up notation for the various cochain groups required. Let \[\cC_n = C_n(G) = \Z G[G^{(n)}]\] denote the free $\Z G$-module on the set of symbols $[g_1,\ldots, g_n]$, as in the standard resolution of $\Z$ by $\Z G$-modules. We consider $\cC_0$ to be spanned by the empty symbol $[]$. 

Construct the following cochain groups:
\begin{eqnarray*}
C^n(G; A) &=& \Hom_G(\cC_n, A)\\
C^n(\famS; A) &=& \Hom_G(\cC_n, \Hom(\Z{} [G/\famS], A))\\
C^{n+1}(G,\famS; A)&=& \Hom_G(\cC_n, \Hom(\Delta_{G/\famS}, A))
\end{eqnarray*} 
We then have the following commutative diagram with exact rows. Appearing in red are symbols denoting the various cochains we will soon construct to allow the reader to place them easily.
\[\begin{tikzcd}
0 &\ar{l} C^3(G,\famS; A)& \ar{l}[swap]{\inc^\ast} C^2(\famS; A) & \ar{l}[swap]{\ev^\ast} C^2(G;A)& \ar{l} 0\ar[phantom, shift left = 2.5ex, end anchor={[xshift=-3ex]}]{l}[red, at end]{\zeta}\\
0 &\ar{l} C^2(G,\famS; A) \ar{u}{d^2}  & \ar{l}[swap]{\inc^\ast} C^1(\famS; A)\ar{u}{d^2} \ar[phantom, shift left = 2.5ex, end anchor={[xshift=-3ex]}]{l}[red, at end]{\xi}& \ar{l}[swap]{\ev^\ast} C^1(G;A)\ar{u}{d^2} \ar[phantom, shift left = 2.5ex, end anchor={[xshift=-3ex]}]{l}[red, at end]{\eta}& \ar{l} \ar[phantom, shift left = 2.5ex, end anchor={[xshift=-3ex]}]{l}[red, at end]{\upsilon}0\\
0 &\ar{l} C^1(G,\famS; A)\ar{u}{d^1} & \ar{l}[swap]{\inc^\ast} C^0(\famS; A) \ar{u}{d^1}\ar[phantom, shift left = 2.5ex, end anchor={[xshift=-3ex]}]{l}[red, at end]{\omega}& \ar[phantom, shift left = 2.5ex, end anchor={[xshift=-3ex]}]{l}[red, at end]{\chi}&
\end{tikzcd}\]

\subsection{Extensions from cochains}\label{sec:ExtFromCochains}

Take a cocycle $\xi \in C^2(G,\famS; A)$ representing some cohomology class $[\xi]$ in $H^2(G,\famS; A)$. Choose $\eta\in C^1(\famS;A)$ such that $\inc^* \eta = \xi$. We see that $\inc^* d^2\eta =0$, so there exists a unique $\zeta\in C^2(G;A)$ such that $\ev^*\zeta= d^2\eta$. This $\zeta$ defines an extension $E=E_\zeta$ of $G$ by $A$ in the usual manner, viz.\ the set $A\times G$ with multiplication
\[(a_1,g_1)\star (a_2, g_2) = (a_1+ g_1\cdot a_2 + \zeta([g_1,g_2]) , g_1 g_2).\]
So far, this is just the extension of $G$ by $A$ corresponding to the cohomology class $\delta([\xi])\in H^2(G;A)$ produced by the Snake Lemma. To produce a relative extension class, we must also construct the sections $q_i\colon S_i\to E$. These arise from the cochain $\eta\in \Hom_G(\cC_1, \Hom(\Z{} [G/\famS], A))$ by defining
\[q_i(s) = (-\eta([s])(1S_i),s).\]
These maps are indeed homomorphisms. We have:
\begin{eqnarray*}
q_i(s_1)q_i(s_2) &=& (-\eta([s_1])(1S_i)-s_1\cdot \eta([s_2])(1S_i)+\zeta([s_1,s_2]),s_1s_2)\\
&=&(-\eta([s_1])(1S_i)- \eta(s_1[s_2])(s_1^{-1}S_i)+\ev^*\zeta([s_1,s_2])(1S_i),s_1s_2)\\
&=& (-\eta([s_1s_2])(1S_i), s_1s_2)\\ &=& q_i(s_1s_2)
\end{eqnarray*}
using the relation $d^2\eta=\ev^\ast \zeta$. 

From $\xi$ we have thus produced a relative extension $(E,(q_i))$ of $(G,\famS)$ by $A$. Let us show that this extension is well-defined up to equivalence.

Replace $\xi$ by another representative $\tilde\xi=\xi+ d^1\omega$ of the same cohomology class, and make a choice $\tilde\eta$ such that $\inc^*\tilde\eta=\tilde\xi$. Let $\tilde\zeta$ be the resulting cocycle with $\ev^*\tilde\zeta = d^2\tilde\eta$.

Choose some $\chi\in C^0(\famS;A)$ such that $\omega = \inc^*\chi$. Then $\inc^*(\tilde\eta-\eta-d^1\chi)=0$, so there exists $\upsilon \in C^1(G;A)$ such that \[\ev^*\upsilon = \tilde\eta-\eta-d^1\chi.\]
The cocycle $\tilde\zeta$ now satisfies $\tilde \zeta=\zeta + d^2\upsilon$, so we have in the usual way an equivalence of extensions $f\colon E_\zeta\to E_{\tilde\zeta}$ defined by
\[f(a,g)=(a-\upsilon([g]),g).\]

We may now compute the sections $\tilde q_i\colon S_i\to E_{\tilde \zeta}$ arising from $\tilde \eta$:
\begin{eqnarray*}
\tilde q_i(s)&=& (-\tilde\eta([s])(1_iS), s)\\
&=& (-\eta([s])(1S_i) - d^1\chi([s])(1S_i) - \ev^*\upsilon([s])(1S_i), s)\\
&=& (\chi([])(1S_i)-\eta([s](1S_i)-\upsilon([s]) -s\cdot \chi([])(1S_i),s)\\
&=& (a_i,1)\star fq_i(s) \star (-a_i,1)
\end{eqnarray*}
where $a_i=\chi([])(1S_i)$. This provides the required $A$-conjugacy of sections, so the relative extension classes $(E_\zeta,(q_i))$ and $(E_{\tilde\zeta}, (\tilde q_i))$ are equivalent as required.

\subsection{Cochains from extensions}\label{sec:CochainsFromExts}

The more involved direction of the equivalence is to take a relative extension and produce a cocycle $\xi\in C^2(G,\famS;A)$ representing it. 

We will need to recast the chain groups $C^n(\famS; A)$ more concretely in terms of the $S_i\in\famS$. For each $S_i$ we have isomorphisms
\[\Phi\colon \Hom_G(\cC_n, \Hom(\Z{}[G/S_i],A))\longleftrightarrow \Hom_{S_i}(\cC_n,A)\,:\!\Psi\]
given by
\[\Phi(\phi)(c_n)= \phi(c_n)(1S_i),\qquad \Psi(\psi)(c_n)(gS)=g\psi(g^{-1}c_n)\]
for $c_n\in \cC_n$.

Since $\cC_n$ is also a free resolution of \Z\ by $\Z S_i$-modules, we may compare it with the standard resolution $C_n(S_i)=\Z S_i[S_i^{(n)}]$. We have a canonical inclusion of chain complexes $I_n\colon C_n(S) \to \cC_n$. We are thus in the situation of Section \ref{sec:ChainHomotopies}, and may compare the chain complexes $\cC_n$ and $C_n(S_i)$ using maps $J_\bullet$, $L_\bullet$ etc.\ as in that section. 

Using the definition $\Z {}[G/\famS] = \bigoplus_{i\in I} \Z{}[G/S_i]$, we may assemble these various complexes into products. We thus have isomorphisms
\[\Phi\colon \Hom_G(\cC_n, \Hom(\Z{}[G/\famS],A))\longleftrightarrow\prod_{i\in I} \Hom_S(\cC_n,A)\,:\!\Psi\]
along with coordinatewise-defined chain maps and homotopies between the chain complexes of abelian groups $\bigoplus_{S_i\in I} \cC_n$ and $\bigoplus_{i\in I} C_n(S_i)$, which we continue to denote by the letters $J_\bullet$, $L_\bullet$ etc.

We note for future reference that if $\zeta\in \Hom_G(\cC_n,A)$ we have the relations
\[\Phi(\ev^*\zeta) = \prod_{i\in I} \zeta, \quad \ev^*\zeta = \Psi\left(\prod_{i\in I} \zeta\right).\]
If one abuses notation slightly to retain the notation $\zeta$ for the element of $\prod_{i\in I}\Hom_{S_i}(\cC_n,A)$ which is $\zeta$ in each coordinate, as we will do, then these relations read simply $\Phi(\ev^*\zeta)=\zeta$, $\Psi(\zeta)=\ev^*\zeta$. 

We now begin the construction. Let $(E,(q_i))$ be a relative extension of $(G,\famS)$ by $A$. Choose a set-theoretic section $r\colon G\to E$ such that $pr=\id_G$ and $r(1)=1$. Define $\zeta\in C^2(G;A)$ by the formula
\[\zeta([g_1,g_2]) = r(g_1)r(g_2)r(g_1g_2)^{-1}\]
so that $d^3\zeta=0$. We now seek a cochain $\eta\in C^1(\famS;A)$ such that $d^2\eta=\ev^*\zeta$. 

For each $i$ the data $q_i$ of the relative extension provides a cochain $\kappa_i\in \Hom_{S_i}(C_1(S_i),A)$ via the formula
\[\kappa_i([s]) = r(s)q_i(s)^{-1}.\]
One may compute that if \[\kappa=(\kappa_i)\in \prod_{i\in I} \Hom_{S_i}(C_1(S_i),A)\] then we have $d^1\kappa = I_2^*\zeta$: in each coordinate we have
\begin{eqnarray*}
d^1\kappa_i([s_1,s_2]) & = & s_1\cdot \kappa_i(s_2) + \kappa_i(s_1) - \kappa_i(s_1s_2)\\
&=& r(s_1)r(s_2)q_i(s_2)^{-1} r(s_1)^{-1}r(s_1)q_i(s_1)^{-1}q_i(s_1s_2)r(s_1s_2)^{-1}\\
& = & r(s_1)r(s_2)r(s_1s_2)^{-1}\\
& = & \zeta ([s_1,s_2])
\end{eqnarray*}

Consider now the quantity
\[J_1^*\kappa - L_1^*\zeta\in \prod_{i\in I} \Hom_{S_i}(\cC_1,A).\]
We have
\begin{eqnarray}
d^2(J_1^*\kappa - L_1^*\zeta) &=& J_2^*d^1\kappa - d^2L_1^*\zeta\nonumber\\ 
&=& J_2^*I_2^* \zeta - d^2L_1^*\zeta\nonumber\\ 
&=& \zeta\circ(I_2J_2-L_1d_2) \nonumber \\
&=& \zeta\circ(\id - d_3L_2) = \zeta \label{eq:EtaAndZeta}
\end{eqnarray}
since $d^3\zeta=0$. Applying the chain map $\Psi$ we find
\[d^2(\Psi(J_1^*\kappa - L_1^*\zeta)) = \ev^*\zeta \]
so we may set $\eta = \Psi(J_1^*\kappa - L_1^*\zeta)$.

Finally declare $\xi=\inc^*\eta$, noting that $d^2\xi=\ev^*\inc^*\zeta=0$. We associate the relative cohomology class $[\xi]\in H^2(G,\famS;A)$ to the relative extension $(E,(q_i))$.

Before we move on to discuss equivalence relations and well-definedness, we make a comment about the proof. The reader may be wondering what the slightly mysterious looking quantity $J_1^*\kappa -L_1^*\zeta$ actually means in terms of group theory. One way of constructing the chain maps $J_\bullet$ begins with an $S_i$-linear map $\Z G\to \Z S_i$ defined via a choice of a set coset representatives of $S_i$ in $G$. Having made this choice, one can construct explicit maps $J_\bullet$ and $L_\bullet$ via increasingly complex formulae involving these coset representatives. In this light, $J_1^*\kappa$ is an induction of a cochain defined over $S_i$ to one defined over $G$ using these coset representatives. The $L_1^*\zeta$ term is a correction to account for the group multiplication in $E$. When in a moment we come to discuss different choices of $J_\bullet$ and $L_\bullet$, one may think of these as showing invariance under different choices of coset representatives. The language of chain maps slims down the process by removing the need to write explicit unwieldy formulae.

We now show that our cohomology class $[\xi]$ is independent of the various choices made. Start with the set-section $r\colon G\to E$. Take a different section $\tilde r\colon G\to E$ with $\tilde r(1)=1$. Denote the cochains constructed beginning with $\tilde r$ by adding tildes, viz. $\tilde\zeta$, $\tilde \eta$ etc.

We define a cochain $\upsilon \in C^1(G;A)$ by 
\[\upsilon ([g]) = \tilde r(g)r(g)^{-1}.\]
Then we have $\tilde\zeta=\zeta+d^2\upsilon$. We also find, for each $s\in S_i$, \[\tilde\kappa_i([s])=\kappa_i([s])+\upsilon([s])\]
i.e.\ $\tilde\kappa = \kappa + I_1^*\upsilon$. We may now compute
\begin{eqnarray}
\tilde\eta-\eta&=& \Psi\left(J_1^*(\tilde\kappa-\kappa) - L_1^*(\tilde\zeta-\zeta)\right)\nonumber\\
&=& \Psi(J_1^*I_1^*\upsilon - L_1^*d^2\upsilon)\nonumber\\
&=& \Psi(\upsilon + d^1L_0^*\upsilon )\nonumber\\
&=& \ev^*\upsilon + d^1\Psi(L_0^*\upsilon). \label{eq:ChangeOfr}
\end{eqnarray}
Thus \[\tilde\xi = \xi + \inc^*\ev^*\upsilon + d^1(\inc^*\Psi(L_0^*\upsilon))\]
and $[\tilde\xi]=[\xi]$ as required.

Next consider an equivalent relative extension $(\tilde E,(\tilde q_i))$. Again use tildes to denote the new quantities derived from this extension by the above construction. We identify $\tilde E$ with $E$ via some equivalence of extensions $f$, so that $\tilde \zeta$ = $\zeta$, and each $\tilde q_i$ is $A$-conjugate to $q_i$ by some $a_i\in A$.

We find 
\begin{eqnarray*}
\tilde\kappa_i([s]) &=& r(s)\tilde q_i(s)^{-1}\\ &=& r(s)a_i^{-1}q_i(s)^{-1}a_i\\
&=& \kappa_i([s]) - s\cdot a_i + a_i
\end{eqnarray*} 
whence $\tilde\kappa = \kappa + d^1\chi$ where $\chi=(\chi_i)\in\prod_{i\in I}\Hom_{S_i}(C_0(S_i),A)$ is defined by $\chi_i([]) = -a_i$. Thus $\kappa$, and therefore $\eta$ and $\xi$, changes only by a coboundary and again $[\tilde\xi]=[\xi]$.

Finally take different choices $\widetilde J_\bullet$ and $\widetilde L_\bullet$ for our chain maps and homotopies. Take chain homotopies and 2-homotopies $M_\bullet$ and $N_\bullet$ which mediate between our choices as in Equations \eqref{eq:DefnOfM} and \eqref{eq:DefOfN}. We find
\begin{eqnarray}
\lefteqn{(\widetilde J_1^* \kappa - \widetilde L_1^*\zeta) - ( J_1^* \kappa -  L_1^*\zeta)} \qquad\qquad\nonumber\\
 &=& \kappa \circ(M_0d_1+d_2M_1) - \zeta\circ (I_2M_1-N_0d_1+d_3N_1)\nonumber\\
&=& d^1(M_0^*\kappa + N_0^*\zeta) + M_1^*(d^2\kappa-I_2^*\zeta)\nonumber \\
&=& d^1(M_0^*\kappa + N_0^*\zeta)\label{eq:ChoicesOfJandL}
\end{eqnarray}
using the relation $d^2\kappa=I_2^*\zeta$. Consequently $\tilde\xi-\xi$ is a coboundary as required.

\subsection{Conclusion}\label{sec:EquivalenceConclusion}

Finally we must establish that the two constructions from the previous subsections are actually inverse to one another. Let us deal with the more involved direction first.

Take a cocycle $\xi\in C^2(G,\famS;A)$ and construct cochains $\eta$ and $\zeta$ depending on $\xi$ to give a relative extension $(E_\zeta,(q_i))$ as in Section \ref{sec:ExtFromCochains}. Take the set theoretic section $r\colon G\to E_\zeta$ defined by $r(g)=(0,g)$ and use it to construct cochains $\tilde\zeta$, $\tilde \kappa$, etc.\ as in Section \ref{sec:CochainsFromExts}. Note that $\tilde \zeta = \zeta$. We show that $\tilde \eta - \eta$ is a coboundary, so that $[\tilde \xi]=[\xi]$ as required.

Note first that 
\begin{eqnarray*}
\tilde\kappa_i([s]) &=& r(s)q_i(s)^{-1}\\ &=& (0,s)\star (-\eta([s])(1S_i),s)^{-1}\\
&=& (\eta([s])(1S_i),1)\\ &=& \Phi(\eta)([s]),  
\end{eqnarray*}
hence $\tilde \kappa = I_1^* \Phi(\eta)$.

Now we compute
\begin{eqnarray*}
\tilde\eta &=& \Psi\left(J_1^*\tilde\kappa - L_1^*\zeta\right)\\
&=& \Psi\left(J_1^*I_1^*\Phi(\eta) - L_1^*\zeta\right)\\
&=& \Psi\left(\Phi(\eta) + d^1L_0^*\Phi(\eta) + L_1^*d^2\Phi(\eta) - L_1^*\zeta\right)\\
&=& \eta + d^1 \Psi(L_0^*\Phi(\eta))
\end{eqnarray*}
as required, noting that $\Phi(d^2\eta)=\Phi(\ev^*\zeta)=\zeta$.

Finally take a relative extension $(E,(q_i))$ and choose a set-theoretic section $r\colon G\to  E$. Produce from these data cochains $\zeta, \kappa,\ldots,\xi$ as in Section \ref{sec:CochainsFromExts}. From $\xi$ we take some choice of cochain $\tilde \eta\in C^2(\famS;A)$ such that $\inc^*\tilde\eta = \xi$; we may as well choose $\tilde \eta = \eta$. This $\eta$ then gives rise to the same cocycle $\tilde\zeta = \zeta \in C^2(G;A)$. From $\zeta$ we construct our usual extension $E_\zeta$, which is equipped with sections $\tilde q_i = (-\eta([s])(1S_i), 1)$. 

We are only left to note that $(E,(q_i))$ and $(E_\zeta, (\tilde q_i))$ are equivalent to each other via the isomorphism $f\colon E\to E_\zeta$ defined by \[f(e) = (erp(e)^{-1}, p(e)).\] We have
\begin{eqnarray*}
fq_i(s) &=& (q_i(s)r(s)^{-1}, s)\\ &=& (-\kappa_i(s),s) \\
&=& (-\eta([s])(1S_i),s)\\ &=& \tilde q_i(s)
\end{eqnarray*}
so our relative extensions are indeed equivalent.

We have shown that our constructions are inverse to one another, and have therefore established the desired relationship between cohomology and relative extensions.

\begin{theorem}\label{thm:discretecase}
Let $(G,\famS)$ be a group pair and let $A$ be a $G$-module. There is a natural bijection between $H^2(G,\famS;A)$ and the equivalence classes of relative extensions of $(G,\famS)$ by $A$.
\end{theorem}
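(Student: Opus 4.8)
The plan is to prove the bijection by constructing explicit maps in both directions and verifying that they are mutually inverse; the theorem then amounts to collecting the work of Sections \ref{sec:ExtFromCochains}--\ref{sec:EquivalenceConclusion}. The commutative diagram of cochain complexes with exact rows introduced above is the organising device throughout, since each passage between $C^\bullet(G,\famS;A)$, $C^\bullet(\famS;A)$ and $C^\bullet(G;A)$ is mediated by the functorial maps $\inc^*$ and $\ev^*$.

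For the map from $H^2(G,\famS;A)$ to relative extensions, I would start with a cocycle $\xi$, lift it to $\eta\in C^1(\famS;A)$ with $\inc^*\eta=\xi$, and obtain the unique $\zeta\in C^2(G;A)$ with $\ev^*\zeta=d^2\eta$. The class of $\zeta$ gives an ordinary extension $E_\zeta$ by the usual twisted product on $A\times G$, and the peripheral splittings are recovered from $\eta$ by $q_i(s)=(-\eta([s])(1S_i),s)$. The two things to check are that each $q_i$ is a homomorphism, which falls out of the identity $d^2\eta=\ev^*\zeta$, and that altering $\xi$ by a coboundary changes the pair $(E_\zeta,(q_i))$ only within its equivalence class, the discrepancy appearing precisely as an $A$-conjugacy of the sections with conjugating elements $a_i=\chi([])(1S_i)$.

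The reverse map is where the real work lies. Given $(E,(q_i))$ I would pick a set-theoretic section $r$, form the standard $2$-cocycle $\zeta$, and record the splitting data as cochains $\kappa_i([s])=r(s)q_i(s)^{-1}$ over each $S_i$, which satisfy $d^1\kappa=I_2^*\zeta$. The difficulty is that $\kappa$ lives over the subgroups while $\xi$ must live over $G$; to transport it I would compare the bar resolution $\cC_\bullet$ with each standard resolution $C_\bullet(S_i)$ using the inclusion $I_\bullet$ and auxiliary maps $J_\bullet,L_\bullet$ from Section \ref{sec:ChainHomotopies}, setting $\eta=\Psi(J_1^*\kappa-L_1^*\zeta)$ and $\xi=\inc^*\eta$. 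I expect the main obstacle to be independence of $[\xi]$ from all the choices---the section $r$, the representative of the equivalence class, and especially the chain maps and homotopies $J_\bullet,L_\bullet$. The last of these is exactly the reason the homotopy-of-homotopies $N_\bullet$ of Equation \eqref{eq:DefOfN} was constructed: the difference $(\widetilde J_1^*\kappa-\widetilde L_1^*\zeta)-(J_1^*\kappa-L_1^*\zeta)$ must be shown to be a coboundary, and the $N_\bullet$-term is what absorbs the ambiguity, leaving $d^1(M_0^*\kappa+N_0^*\zeta)$.

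Finally I would verify the two assignments are mutually inverse. Running cohomology-to-extension-to-cohomology, the choice of section $r(g)=(0,g)$ gives $\tilde\kappa=I_1^*\Phi(\eta)$ and a short computation yields $\tilde\eta=\eta+d^1\Psi(L_0^*\Phi(\eta))$, so the class is preserved. For the other composite the explicit isomorphism $f(e)=(erp(e)^{-1},p(e))$ exhibits $(E,(q_i))$ as equivalent to the reconstructed relative extension. Naturality is then immediate, since every operation used---$\inc^*$, $\ev^*$, and the adjunction isomorphisms $\Phi,\Psi$---is functorial in $A$ and in the pair $(G,\famS)$.
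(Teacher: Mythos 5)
Your proposal is correct and takes essentially the same approach as the paper: the same forward construction with sections $q_i(s)=(-\eta([s])(1S_i),s)$, the same reverse transport $\eta=\Psi(J_1^*\kappa-L_1^*\zeta)$ with independence of choices absorbed by the homotopies $M_\bullet$ and the 2-homotopy $N_\bullet$, and the same mutual-inverse verifications via $r(g)=(0,g)$ and $f(e)=(erp(e)^{-1},p(e))$. There is nothing to add.
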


\section{The profinite case}\label{sec:TheProfiniteCase}

\subsection{Obstructions in the profinite case}\label{sec:ProfiniteObstructions}

We would like to be able to replicate the above result for profinite group pairs. Indeed, where the family \famS\ of peripheral subgroups is finite, the proof in Section \ref{sec:RelExtsCohom} needs little modification except a liberal scattering of the word `continuous'. The problems arise when the family \famS\ is infinite. 

Recall that our family \famS\ is `continuously indexed' over a profinite set $X$. The subgroups thus depend continuously on $X$, and everything else that we write down ought to depend continuously on $X$ as well. For example, our definition of relative extension class will be modified thus:

\begin{defn}
Let $(G,\famS)$ be a profinite group pair and let $A$ be a pro-$\pi$ $G$-module. A {\em relative extension} of $(G,\famS)$ by $A$ is a pair $(E, q)$ where \[0\to A\to E \stackrel{p}{\to} G\to 1\] is a continuous extension of $G$ by $A$ and $q\colon \famS \to E$ is a continuous map such that each restriction $q_x\colon S_x\to E$ for $x\in X$ is a homomorphism such that $pq_x = \id_{S_x}$.

Two such relative extensions $(E,q)$ and $(\widetilde E, \tilde q)$ are declared to be {\em equivalent} if there is an equivalence of extensions $f\colon E \to \widetilde E$ such that there exists a continuous function $a_\bullet\colon X\to A$ witnessing that the maps $fq_x$ and $\tilde q_x$ are $A$-conjugate: 
\[\tilde q_x(s)=a_x\cdot f\circ q_x(s) \cdot a_x^{-1}  \]
for all $s\in S_x$.

The {\em trivial relative extension} is the extension $E=A\rtimes G$ with $q(s,x) = (1,s)$ for all $s\in S_x$.
\end{defn}

This definition is made for profinite $A$. For the rest of the section we restrict ourselves to finite $A$, since in general $H^2(G,\famS;A)$ is sensibly defined only for discrete torsion $G$-modules $A$.

From this starting point, we would wish to define our various cochains in a continous manner. Most of Section \ref{sec:RelExtsCohom} presents no problems in this regard: since the starting data of cocycles and pairs $(E,q)$ have the required topology, the ensuing definitions are themselves continuous. However one may expect to find difficulties where a choice is made for each $x\in X$. We would then have to ask whether such choices can be made continuously. Unfortunately, there is such a place in the proof; and what is worse, there is an obstruction to making such choices continuously.

The key issue lies in Section \ref{sec:CochainsFromExts}. From the expression $\Z{}[G/\famS] = \bigoplus_i \Z{}[G/S_i]$ we found isomorphisms 
\[\Phi\colon \Hom_G(\cC_n, \Hom(\Z{}[G/\famS],A))\longleftrightarrow\prod_{i\in I} \Hom_{S_i}(\cC_n,A)\,:\!\Psi.\]
For each $S_i$ we then exploited choices of chain equivalences of $\cC_n$ with $C_n(S_i)$ which ultimately arise from the existence of choices of coset representatives of $S_i$ in $G$. 

There are two problems with translating this to the profinite case: one serious, the other not. Firstly, we are no longer dealing with a direct sum $\bigoplus_{i\in I}$ but with a profinite direct sum $\bigboxplus_{x\in X}$. The appropriate duality will now read
\[\Phi\colon \Hom_G(\cC_n, \Hom(\Zpiof{G/\famS},A))\longleftrightarrow \Hom_{\famS }\left(\bigboxplus_{x\in X}\cC_n,A\right)\,:\!\Psi\]
where $\Hom_\famS$ denotes those homomorphisms which are $S_x$-linear on their $x$-component\footnote{One could transform $\Hom(\bigboxplus \ldots)$ into some sort of restricted direct product of $\Hom$-groups, but we would gain too little to warrant setting up even more notation.}.

It is the next step that causes a problem. There may be no continuous choice of chain map $J_\bullet$ between $\cC_n$ and $C_n(S_x)$, ultimately because there may be no way to continuously choose our coset representatives for $S_x$ in $G$. We may be unable to define chain maps $\cC_n\to C_n(S_x)$ in a continuous manner, and therefore be unable to collect these maps together into the required continuous morphisms
\[\bigboxplus_{x\in X}\cC_n\to \bigboxplus_{x\in X}C_n(S_x).\]

A simple example of this behaviour would be to take $G$ to be a cyclic group $\gp{t}$ of order 2 and for $X$ to be the one-point compactification $\N\cup\{\infty\}$ of the natural numbers, with $S_n = 1$ and $S_\infty = G$. Suppose we have the required continuous map
\[J_0\colon \bigboxplus_{x\in X} \Zpiof{G} \to\bigboxplus_{x\in X} \Zpiof{S_x} \]
i.e.\ $J_0$ restricts to an $S_x$-linear map $\Zpiof{G}\to \Zpiof{S_x}$ on each $x$-coordinate, and lifts the identity map $\Z[\pi]\to\Z[\pi]$. We use subscripts to denote elements of the different copies of $G$.

We note that there is a unique choice of $J_0$ on the $n^{\rm th}$ coordinate, for $n\in\N$, viz.\ $J_0(1_n)=J_0(t_n)=1_n$. But then by continuity we find $J_0(1_\infty)=J_0(t_\infty)=1_\infty$, contradicting the fact that the restriction to the $\infty$-coordinate should be an $S_\infty$-linear map.


We will remedy the situation by appealing to the finiteness of $A$. This will allow us, for each individual relative extension, to approximate $\famS$ by a finite object and thus circumvent the above problems.

\subsection{Finite quotients of \famS\ }
Let $G$ be a profinite group and let $\famS$ be a family of subgroups of $G$ continuously indexed over a profinite space $X$.
\begin{defn}
A {\em finite quotient} of $\famS$ is a triple $(U,\famT, Y)$ where $U\nsgp[o] G$, $\pi\colon X\to Y$ is a finite quotient space of $X$, and $\famT=\bigcup_{y\in Y} T_y\times\{y\}$ is a closed subset of $G/U\times Y$ such that:
\begin{itemize}
\item for each $y\in Y$, $T_y = \bigcup_{x\in \pi^{-1}(y)} S_xU/U$; and
\item for each $y\in Y$ there exists $x\in \pi^{-1}(y)$ such that $T_y = S_xU/U$.
\end{itemize}
\end{defn}
Note that there is a natural surjective continuous function $\pi\colon \famS \to \famT$, and that the second condition guarantees that $T_y$ is in fact a subgroup of $G/U$. Observe also that $\famT$ is somewhat redundant in the notation, being specified by $U$ and $Y$. It will occasionally be omitted from the notation.

\begin{lem}\label{lem:QuotientsOfSheaves}
Let $U\nsgp[o]G$. Then there is a clopen partition $\{A_y: y\in Y\}$ of $X$ such that for each $y\in Y$ there exists $x\in A_y$ such that $S_{x'}\subseteq S_xU$ for all $x'\in A_y$. We thus have a well-defined finite quotient $(U,\famT, Y)$ of $\famS$.
\end{lem}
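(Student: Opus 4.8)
The plan is to reduce the statement to a purely topological fact about a single order-valued function. Write $\bar G=G/U$, a finite group, and let $\pi\colon G\to\bar G$ be the quotient. For $x\in X$ set $\sigma(x)=S_xU/U=\pi(S_x)$, a subgroup of $\bar G$; thus $\sigma$ is a function from $X$ to the finite poset of subgroups of $\bar G$ ordered by inclusion. Everything asked for is encoded in $\sigma$: a clopen partition $\{A_y\}_{y\in Y}$ works precisely when, on each part $A_y$, the function $\sigma$ attains a maximum, say at $x=x(y)$. Indeed $\sigma(x')\subseteq\sigma(x(y))$ for all $x'\in A_y$ says exactly that $S_{x'}U\subseteq S_{x(y)}U$, hence $S_{x'}\subseteq S_{x(y)}U$; and then $T_y:=\sigma(x(y))$ is a subgroup equal to $\bigcup_{x'\in A_y}S_{x'}U/U$ and realised by $x(y)$, so both clauses of the definition of a finite quotient hold (closedness of $\famT$ in $G/U\times Y$ being automatic, as both factors are finite).

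First I would record two semicontinuity properties of $\sigma$. For a subgroup $H\leq\bar G$ the set $\{x:\sigma(x)\subseteq H\}=\{x:S_x\subseteq\pi^{-1}(H)\}$ is open, directly from the continuous indexing hypothesis (Definition \ref{DefCtsIndex}), since $\pi^{-1}(H)$ is open. The harder, and to my mind the crux, direction is that $\{x:\sigma(x)\supseteq H\}$ is closed. Here I would use the other formulation of continuous indexing: $\famS=\{(g,x):g\in S_x\}$ is closed in $G\times X$. For $h\in\bar G$ the coset $\pi^{-1}(h)$ is clopen (a coset of the open finite-index subgroup $U$), so $\famS\cap(\pi^{-1}(h)\times X)$ is closed, hence compact, in the compact space $G\times X$; its projection to $X$, namely $\{x:S_x\cap\pi^{-1}(h)\neq\varnothing\}=\{x:h\in\sigma(x)\}$, is therefore closed. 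Intersecting over the finitely many $h\in H$ shows $\{x:\sigma(x)\supseteq H\}$ is closed. This compactness argument is the one genuinely non-formal step.

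With these two facts in hand, the remaining work is a topological extraction, which I would isolate as a lemma: if $X$ is a profinite (Stone) space and $\sigma\colon X\to L$ takes values in a finite poset $L$ with $\{\sigma\leq\ell\}$ open and $\{\sigma\geq\ell\}$ closed for every $\ell$, then $X$ admits a finite clopen partition on each part of which $\sigma$ attains a maximum. I would prove this by induction on the number of distinct values of $\sigma$. Let $m_1,\dots,m_s$ be the maximal values actually attained; for each $m_t$ the level set $P_{m_t}=\{\sigma=m_t\}$ coincides with $\{\sigma\geq m_t\}$ and is therefore closed, and the $P_{m_t}$ are pairwise disjoint. Every point lies below some maximal value, so $\{\{\sigma\leq m_t\}\}_t$ is a finite open cover. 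Using that clopen sets form a basis of a Stone space and that disjoint compacta can be separated by clopen sets, I would choose pairwise disjoint clopen $D_t$ with $P_{m_t}\subseteq D_t\subseteq\{\sigma\leq m_t\}$; on each $D_t$ the value $m_t$ is an upper bound attained on $P_{m_t}$. The leftover $Z=X\setminus\bigcup_t D_t$ is clopen and misses every $P_{m_t}$, so $\sigma|_Z$ omits all the maximal values; the two semicontinuity properties persist on $Z$, so the inductive hypothesis partitions it further, and combining the $D_t$ with the partition of $Z$ gives the desired partition.

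The main obstacle is precisely the closed-direction semicontinuity of $\sigma$. The example of Section \ref{sec:ProfiniteObstructions} shows $\sigma$ need not be locally constant, so the level sets $\{\sigma=H\}$ are in general only locally closed and one genuinely cannot split $X$ into clopen pieces on which $\sigma$ is constant. It is the weaker ``attains a maximum'' formulation, made available by the compactness argument that gives closedness of $\{\sigma\geq H\}$, that rescues the statement. Once the partition is produced, translating back to the language of $(U,\famT,Y)$ and checking the two bullet points in the definition of a finite quotient is routine.
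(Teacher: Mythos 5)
Your proposal is correct, and its overall skeleton is in fact the same as the paper's: both arguments isolate the maximal attained values of $x\mapsto S_xU$, observe that their level sets are closed, separate these disjoint closed sets by disjoint clopen sets sitting inside the open down-sets $\{x: S_x\subseteq M\}$, and recurse on the clopen remainder, with termination guaranteed by the finiteness of $G/U$ (your bookkeeping counts distinct values; the paper's notes that each new maximal subgroup is properly contained in an old one). Where you genuinely diverge is in the proof of the crucial closedness statement. The paper proves directly that $X_M=\{x: S_xU=M\}$ is closed when $M$ is maximal: for $y\notin X_M$, the open set $\{x: S_x\subseteq S_yU\}$ is a neighbourhood of $y$ missing $X_M$, because $S_vU=M$ together with $S_v\subseteq S_yU$ would force $M\subseteq S_yU$ and hence $M=S_yU$ by maximality. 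This uses only the open-sets formulation of Definition \ref{DefCtsIndex}, but it is special to maximal $M$ (and indeed must be, as your own remark about the example of Section \ref{sec:ProfiniteObstructions} shows). Your argument instead projects the compact set $\famS\cap(\pi^{-1}(h)\times X)$ to $X$ to conclude that $\{x: h\in\sigma(x)\}$ is closed, hence that $\{x:\sigma(x)\supseteq H\}$ is closed for \emph{every} subgroup $H$ of $G/U$; closedness of the level sets of maximal attained values then falls out as a special case. This uses the closed-subset formulation of continuous indexing and a compactness-of-projections step that the paper avoids, but it buys a cleaner, maximality-free semicontinuity lemma about poset-valued functions on Stone spaces that is reusable beyond this lemma; the paper's route is slightly more economical for the specific statement at hand. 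Both proofs then rest on the same standard fact that finitely many pairwise disjoint closed subsets of a profinite space can be engulfed in pairwise disjoint clopen sets inside prescribed open neighbourhoods.
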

\begin{proof}
Let $\cal M$ be the set of subgroups $M/U$ of $G/U$ such that $M = S_xU$ for some $x\in X$, but $M$ is not properly contained in $S_xU$ for any $x\in X$. 

Each set $X_M= \{x\in X: S_xU= M\}$ is now non-empty, and moreover is closed: if $y\notin X_M$, then there is an open neighbourhood $V$ of $y$ such that $S_v\subseteq S_yU$ for all $v\in V$, whence $V\cap X_M=\emptyset$.

Recall also that $W_M = \{x\in X: S_x\subseteq M\}$ is open in $X$. By a standard compactness argument, there we may now find disjoint clopen sets $A_M\subset X$ such that $X_M\subseteq A_M\subseteq W_M$ for each $M\in \cal M$.  

These $A_M$ form part of our desired partition of $X$. We now repeat the process, replacing $X$ with $X\smallsetminus \bigcup A_M$. Note that each member of our new family of maximal subgroups $\cal M$ will be a proper subgroup of a member of the old family. This process will thus terminate after finitely many steps, since $G/U$ is finite.  
\end{proof}
\begin{prop}\label{prop:FactoringThruQuotients}
Let $f\colon \famS\to F$ be a continuous function to a finite set. Then there is a finite quotient $(U,\famT, Y)$ of $\famS$ such that $f$ factors through the quotient map $\famS\to\famT$.
\end{prop}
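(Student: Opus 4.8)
The plan is to reduce the statement to a purely combinatorial one about profinite spaces and then invoke Lemma \ref{lem:QuotientsOfSheaves}. Recall that $\famS$ is a closed, hence compact, subspace of the profinite space $G\times X = \varprojlim_{(U,Y)}\bigl(G/U\times Y\bigr)$, where the limit runs over pairs consisting of an open normal subgroup $U\nsgp[o] G$ and a finite quotient $X\to Y$. Writing $\famS_{U,Y}$ for the image of $\famS$ under the projection $(g,x)\mapsto (gU,[x]_Y)$, we have $\famS = \varprojlim_{(U,Y)}\famS_{U,Y}$ as a cofiltered inverse limit of finite sets. Since $f$ is a continuous map from this limit to the finite discrete set $F$, the standard fact that such a map factors through a finite stage produces an open normal $U_0\nsgp[o] G$ and a finite quotient $X\to Y_0$, with associated clopen partition $\mathcal P_0$ of $X$, such that $f(g,x)$ depends only on the pair $(gU_0, [x]_{Y_0})$; equivalently, $f$ is constant on the fibres of $\famS\to\famS_{U_0,Y_0}$.

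It remains to upgrade the arbitrary finite quotient $Y_0$ to a genuine finite quotient $(U_0,\famT,Y)$ of $\famS$ in the sense of the preceding definition, without destroying this factorisation. The obstruction is that the sets $\bigcup_{[x]_{Y_0}=y}S_xU_0/U_0$ need not each be a single subgroup of the form $S_xU_0/U_0$, so $Y_0$ may not itself define a finite quotient. To fix this I would apply Lemma \ref{lem:QuotientsOfSheaves}, with the fixed open subgroup $U=U_0$, separately to each part $P\in\mathcal P_0$: each such $P$ is clopen, hence a profinite space over which $\famS$ restricts to a continuously indexed family, so the lemma yields a clopen partition of $P$ each of whose parts $A$ contains some $x$ with $S_{x'}\subseteq S_xU_0$ for all $x'\in A$. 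Taking the union of these partitions over the finitely many $P\in\mathcal P_0$ produces a clopen partition $\{A_y\}_{y\in Y}$ of $X$ that refines $\mathcal P_0$ and satisfies the required maximality condition in each part; this is exactly the data of a finite quotient $(U_0,\famT,Y)$, whose associated $T_y = \bigcup_{[x]=y}S_xU_0/U_0 = S_xU_0/U_0$ is indeed a subgroup.

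Finally I would verify that $f$ factors through the resulting quotient map $\famS\to\famT$, $(g,x)\mapsto(gU_0,[x]_Y)$, whose image is precisely $\famT$. Because the partition $\{A_y\}$ refines $\mathcal P_0$ and we have kept $U=U_0$, every fibre of $\famS\to\famT$ is contained in a fibre of $\famS\to\famS_{U_0,Y_0}$; as $f$ is constant on the latter fibres by the first step, it is constant on the former, and therefore descends to $\famT$. I expect the second step to be the main obstacle: the compactness argument gives no control over the shape of the partition $Y_0$, and a naive common refinement of $Y_0$ with the partition supplied by Lemma \ref{lem:QuotientsOfSheaves} need not retain the maximal-subgroup condition defining a finite quotient. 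Running the lemma piecewise over the parts of $\mathcal P_0$ is what reconciles the refinement needed to preserve the factorisation with the structural condition needed for $\famT$ to be a legitimate finite quotient.
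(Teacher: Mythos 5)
Your proposal is correct and follows essentially the same two-step structure as the paper's proof: first factor $f$ through a map depending only on $(gU,[x]_{Y})$ for some open normal $U$ and finite partition of $X$, then repair the failure of the subgroup condition by applying Lemma \ref{lem:QuotientsOfSheaves} with that fixed $U$ separately on each part of the partition, which is exactly what the paper does and which, as you note, is the step a naive global refinement would break. The only difference is in how the first step is justified---the paper gives a hands-on compactness argument, covering $(f\times f)^{-1}(D)$ by boxes for $D$ the diagonal of $F\times F$, whereas you invoke the standard fact that a continuous map from the profinite space $\famS=\varprojlim_{(U,Y)}\famS_{U,Y}$ to a finite discrete set factors through a finite stage; these are equivalent, yours being a citation of the fact the paper proves inline.
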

\begin{proof}
Consider the product map $g=f\times f\colon \famS\times\famS \to F\times F$, and the diagonal subset $D\subseteq F\times F$. For each $r=((s,x),(s',x'))\in g^{-1}(D)$, there are clopen subsets $B_r$ and $C_r$ of $X$ and open normal subgroups $U_r$ and $V_r$ of $G$ such that 
\[((s,x),(s',x'))\in \left((sU_r\times B_r)\cap\famS\right) \times \left((s'V_r\times C_r)\cap\famS\right) \subseteq g^{-1}(D).\]
We may cover the compact space $g^{-1}(D)$ by a finite collection of such `boxes'. Intersecting all the $U_r$ and $V_r$, and producing a partition from intersections of all the $B_r$ and $C_r$, we obtain an open normal subgroup $U$ of $G$ and a partition $\{A_y:y\in Y\}$ of $X$ such that whenever $f(s,x)=f(s',x')$ we have
\[((s,x),(s',x'))\in \left((sU\times A_y)\cap\famS\right) \times \left((s'U\times A_{y'})\cap\famS\right) \subseteq g^{-1}(D)\]
where $x\in A_y$, $x'\in A_{y'}$. Then $f$ factors through the natural map to the finite space $\famT \subseteq G/U \times Y$ defined by $T_y = \bigcup_{x\in \pi^{-1}(y)} S_xU/U$.

It may not yet be the case that each $T_y$ is actually a subgroup of $G/U$. For each $y\in Y$ we then apply Lemma \ref{lem:QuotientsOfSheaves} to refine our partition further and obtain a genuine finite quotient $(U,\famT', Y')$ through which $f$ factors.
\end{proof}

We define a partial order on the set of finite quotients by $(U,\famT,Y)\succeq (U',\famT',Y')$ if $U\leq U'$ and $X\to Y'$ factors through $Y$. One consequence of the above proposition is to give us a representation 
\[\famS = \varprojlim_{(U,\famT, Y)} \famT \]
of \famS\ as an inverse limit of its finite quotients.

\subsection{Equivalence of cohomology and extensions}\label{sec:ProfiniteRelExts}

We are now ready to demonstrate the profinite counterpart of Theorem \ref{thm:discretecase}. A large amount of this proof is an almost verbatim copy of Section \ref{sec:RelExtsCohom} and we shall not repeat those portions of the argument.

Let $(G,\famS)$ be a profinite group pair, where $\famS$ is continuously indexed over the profinite set $X$. Let $\pi$ be a non-empty set of primes and let $A$ be a $\pi$-primary finite $G$-module.

To begin with, we note that Section \ref{sec:ExtFromCochains} requires no modifications beyond adding the word `continuous' in myriad places. None of the issues discussed in Section \ref{sec:ProfiniteObstructions} present themselves, and we shall move on.

We must now construct the inverse function, which takes a class of relative extensions and produces a cohomology class. We will accomplish this by expressing the set of extensions as a direct limit.

Let $\Ext$ be the set of triples $(E,q,r)$ where $(E,q)$ is a relative extension of $(G,\famS)$ by the finite $G$-module $A$, and $r\colon G\to E$ is a continuous set theoretic section of the map $E\to G$ with $r(1)=1$. Given $r$ we produce the continuous functions 
\begin{eqnarray*}\zeta\colon G^2 \to A,&& \zeta([g_1,g_2]) = r(g_1)r(g_2)r(g_1g_2)^{-1}\\
\kappa\colon \famS\to A,&& \kappa(s,x) = r(s)q_x(s)^{-1}.\end{eqnarray*}
For a finite quotient $(U,\famT,Y)$ of \famS, let $\Ext_{U,Y}$ denote the set of triples $(E,q,r)$ such that $\zeta$ factors through a map $\zeta^U\colon G/U\to A$ and $\kappa$ factors through a map $\kappa^U\colon \famT\to A$. Observe that $\zeta^U$ and $\kappa^U$ are uniquely defined.

By Proposition \ref{prop:FactoringThruQuotients} we find that for any $(E,q,r)$ there exists some finite quotient through which $\zeta$ and $\kappa$ factor. We thus conclude that $\Ext$ is a direct limit
\[\Ext = \varinjlim_{(U,Y)} \Ext_{U,Y}.\]

Take $(E,q,r)\in \Ext_{U,Y}$. We will use the symbol $\pi_U$ to denote the quotient maps $\famS\to \famT$, $G\to G/U$ and the various chain maps consequent on these quotients.

For any profinite group $H$ we will use $C_\bullet(H)$ to denote the standard resolution of $\Z[\pi]$ by \Zpiof{H}-modules; that is,
\[C_n(H) = \Zpiof{H}[\![H^{(n)}]\!].\]
We then have (cf Equation \eqref{eq:MapsOfDirectSums}) a commuting diagram of chain complexes 
\[\begin{tikzcd}
\bigboxplus_{x\in X} C_\bullet(S_x) \ar{r}{\pi_U}\ar{d}{I_\bullet} & \bigoplus_{y\in Y}C_\bullet(T_y)\ar{d}{I_\bullet}\\
\bigboxplus_{x\in X} C_\bullet(G) \ar{r}{\pi_U}& \bigoplus_{y\in Y}C_\bullet(G/U)
\end{tikzcd}\]
where as before $I_\bullet$ denotes maps induced by inclusions of groups. Since the maps $G\to G/U$ and $\famS \to \famT$ are surjective, we inherit the relations
\[d^3\zeta^U = 0,\quad d^2\kappa^U = I_2^\ast\zeta^U \]
from Section \ref{sec:ExtFromCochains}.

For each $y$, the complex $C_\bullet(G/U)$ is a projective resolution of \Z[\pi] by $T_y$-modules. Applying Section \ref{sec:ChainHomotopies} on each coordinate we obtain a chain map
\[J_\bullet\colon  \bigoplus_{y\in Y} C_\bullet(G/U) \to  \bigoplus_{y\in Y} C_\bullet(T_y)\]
lifting the identity map on \Z[\pi], and a chain homotopy
\[L_\bullet\colon \bigoplus_{y\in Y} C_\bullet(G/U) \to \bigoplus_{y\in Y} C_{\bullet+1}(G/U) \]
satisfying Equation \eqref{eq:DefnOfL}. Each of these maps is $T_y$-linear on the $y$-coordinate.

We may now define a cochain
\[\eta = \Psi\left(\pi_U^\ast(J_1^\ast\kappa^U - L_1^\ast \zeta^U)\right)\in C^1(\famS;A).\]  
\begin{figure}[ht]
\centering

\[\begin{tikzcd}
 & & \bigoplus_{y\in Y} C_1(T_y) \ar{dr}{\kappa^U} & \\
 \bigboxplus_{x\in X} C_1(G) \ar{r}{\pi_U}& \bigoplus_{y\in Y}C_1(G/U) \ar{ur}{J_1} \ar{dr}{L_1} & & A\\
  & & \bigoplus_{y\in Y} C_2(G/U) \ar{ur}{\zeta^U} & 
\end{tikzcd}\]\caption{Construction of the cochain $\eta$}
\end{figure}
We note that 
\[
d^2\eta = \Psi\left( \pi_U^\ast d^2(J_1^\ast\kappa^U - L_1^\ast \zeta^U)\right) = \Psi(\pi_U^\ast \zeta^U)  = \ev^\ast \zeta
\]
using similar manipulations to Equation \eqref{eq:EtaAndZeta}. We are thus entitled to mimic Section \ref{sec:CochainsFromExts} and define a function 
\[\epsilon_{U,Y}\colon \Ext_{U,Y} \to H^2(G,\famS;A), \quad (E,q,r)\mapsto [\inc^\ast\eta].\]
Note that this function is well-defined. To define $\eta$ we made choices of $J_\bullet$ and $L_\bullet$. Different choices of $J_\bullet$ and $L_\bullet$ are mediated by chain homotopies and 2-homotopies $M_\bullet$ and $N_\bullet$ defined on $C_\bullet(G/U)$ as in Section \ref{sec:CochainsFromExts}, so $\inc^\ast\eta$ changes only by a coboundary, via computations similar to Equation \eqref{eq:ChoicesOfJandL}.  

We must now check that these functions $\epsilon_{U,Y}$ are consistent with the inclusion maps of our direct limit $\Ext = \varinjlim \Ext_{U,Y}$. Take some $(E,q,r)\in \Ext_{U,Y}$ and let $(V,{\cal W},Z)$ be a finite quotient of $\famS$ with $(V,Z)\succeq (U,Y)$. Note that $\zeta$ and $\kappa$ both factor over $(V,Z)$. Regarding $(E,q,r)$ as an element of both $\Ext_{U,Y}$ and of $\Ext_{V,Z}$, produce cochains $\eta^U$ and $\eta^V$ via the above process, decorating the symbols $\zeta, \kappa, J, L,\ldots$ with superscripts $U$ and $V$ appropriately. We use $\rho$ to denote the transition maps ${\cal W}\to \famT$, $Z\to Y$ and the chain maps consequent on these.

We now have a commuting diagram of chain complexes:
\[\begin{tikzcd}
\bigboxplus_{x\in X} C_\bullet(S_x) \ar{r}{\pi_V}\ar{d}{I_\bullet} & \bigoplus_{z\in Z}C_\bullet(W_z)\ar{r}{\rho}\ar{d}{I_\bullet}& \bigoplus_{y\in Y}C_\bullet(T_y)\ar{d}{I_\bullet}\\
\bigboxplus_{x\in X} C_\bullet(G) \ar{r}{\pi_V}& \bigoplus_{z\in Z}C_\bullet(G/V)\ar{r}{\rho}& \bigoplus_{y\in Y}C_\bullet(G/U)
\end{tikzcd}\]
For each $z\in Z$ we have a pair of maps of chain complexes of $W_z$-modules\footnote{We have $\rho(W_z)\subseteq T_{\rho(z)}$, so we may regard $C_\bullet(T_{\rho(z)})$ as a $W_z$-module.}
\[\rho J^V_\bullet, J^U_\bullet\rho\colon C_\bullet(G/V)\to C_\bullet (T_{\rho(z)}) \]
each of which lifts the identity map on \Z[\pi]. 

Since $C_\bullet(G/V)$ is projective as a $W_z$-module, from Section \ref{sec:ChainHomotopies} we have a chain homotopy
\[M_\bullet \colon C_\bullet(G/V) \to C_{\bullet+1}(T_{\rho(z)})\]
satisfying
\[\rho J^V_n - J^U_n \rho = M_{n-1}d_n + d_{n+1}M_n.\]

We similarly have chain homotopies $\rho L^V_\bullet$ and $L^U_\bullet \rho$ mediating between the maps $I_\bullet \rho J^V_\bullet$ and $I_\bullet J^U_\bullet \rho$ and the identity respectively. Then from Section \ref{sec:ChainHomotopies} we find a 2-homotopy 
\[ N_\bullet \colon C_\bullet(G/V) \to C_{\bullet+2} (G/U)\]
such that 
\[ \rho L^V_n - L^U_n \rho = I_{n+1}M_n +  d_{n+2}N_n - N_{n-1} d_n.\]

Using the relations $\pi_U = \rho \pi_V$, $\kappa^V = \kappa^U \rho$ and so on, we may now compute
\begin{eqnarray*}
\lefteqn{\pi_V^\ast( J_1^{V *} \kappa^V -  L_1^{V *}\zeta^V) - \pi_U^\ast( J_1^{U *} \kappa^U -  L_1^{U *}\zeta^U)} \qquad\qquad\nonumber\\
 &=& \pi_V^\ast\left( (\kappa^U\circ(\rho J_1^V) - \zeta^U\circ(\rho L_1^V)) - (\kappa^U\circ( J_1^U\rho) - \zeta^U\circ( L_1^U \rho))  \right)\\
 &=&  \pi_V^\ast\left( \kappa^U \circ(M_0d_1+d_2M_1) - \zeta^U\circ (I_2M_1-N_0d_1+d_3N_1)\right)\nonumber\\
&=& \pi_V^\ast\left(d^1(M_0^*\kappa^U + N_0^*\zeta^U) + M_1^*(d^2\kappa^U-I_2^*\zeta^U)\right) \nonumber \\
&=& d^1\pi_V^\ast\left(M_0^*\kappa^U + N_0^*\zeta^U\right)
\end{eqnarray*}
whence $\eta^U$ and $\eta^V$ differ only by a coboundary. 

We next pass from a map $\Ext\to H^2(G,\famS;A)$ to a map from the set of equivalence classes of relative extensions by imposing two equivalence relations: the relation that forgets the choice of section $r\colon G\to E$, and the relation of equivalence of relative extensions.

Firstly, the choice of $r$. Let $r$ and $\tilde r$ be two choices of section $G\to E$ and use tildes to distinguish the various definitions made depending on these choices. Define the continuous cochain $\upsilon\in C^1(G;A)$ by 
\[\upsilon([g]) = \tilde r(g) r(g)^{-1}.\]  
Now take some finite quotient $(U,\famT,Y)$ of $\famS$ such that all the maps $\kappa$, $\tilde \kappa$, $\zeta$, $\tilde \zeta$ and $\upsilon$ factor over $(U,Y)$. We have, as before, relations $\tilde \kappa = \kappa + I_1^\ast \upsilon$ and $\tilde\zeta = \zeta + d^2\upsilon$. Computations similar to Equation \eqref{eq:ChangeOfr} now yield
\[\tilde\eta - \eta = \ev^\ast\upsilon + d^1\Psi(\pi_U^\ast L_0^\ast\upsilon^U).\]
so that $[\inc^\ast\tilde \eta]= [\inc^\ast\eta]$ as required.

Next consider an equivalent extension $(\widetilde E,\tilde q)$ to $(E,q)$. We identify $E$ with $\widetilde E$ by some equivalence of extensions $f$, so that $\tilde q$ is $A$-conjugate to  $q$ via some $a_\bullet\colon X\to A$. As previously, we have $\tilde \zeta = \zeta$, and $\tilde \kappa = \kappa + d^1\chi$ where $\chi\colon \bigboxplus_{x\in X} C_0(S_x)\to A$ is defined by $\chi_x([]) = - a_x$.

We may choose some $r\colon G\to E$, and take some $(U,\famT,Y)$ over which the cochains $\zeta$, $\kappa$ and $a_\bullet$ all factor. Then $\chi$ factors over a map $\chi^U\colon \bigoplus_{y\in Y} C_0(T_y)\to A$, and the equation $\tilde \kappa^U= \kappa^U + d^1\chi^U$ still holds. Once again, $\tilde\eta$ and $\eta$ differ only by a coboundary.

We ought finally to reprise Section \ref{sec:EquivalenceConclusion}, to show that our functions 
\[H^2(G,\famS;A)\longleftrightarrow \{\text{Relative extension classes}\} \]
are indeed inverses to one another. No reader who has successfully swum through the torrent of notation in the last few pages will find any surprises here. There is no material difference from Section \ref{sec:EquivalenceConclusion}, save that in order to define certain things we must pass through a suitably chosen quotient $(U,\famT,Y)$. We will show mercy by omitting further details.
\begin{theorem}\label{thm:profinitecase}
Let $(G,\famS)$ be a profinite group pair and let $A$ be a finite $G$-module. There is a natural bijection between $H^2(G,\famS;A)$ and the equivalence classes of relative extensions of $(G,\famS)$ by $A$.
\end{theorem}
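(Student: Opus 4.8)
The plan is to follow the discrete argument of Theorem \ref{thm:discretecase} as closely as possible, inserting continuity requirements throughout and circumventing the single genuine obstruction identified in Section \ref{sec:ProfiniteObstructions}. The forward direction---producing a relative extension from a cocycle $\xi \in C^2(G,\famS;A)$---transfers essentially verbatim from Section \ref{sec:ExtFromCochains}: the starting cocycle and the resulting data $\zeta$, $\eta$ and $q$ are built by continuous operations from continuous input, no choice depending pointwise on $x \in X$ is required, and the only work is to confirm that each construction respects the topology. I would dispatch this direction first and then concentrate on the reverse map, which is where the difficulty lies.

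The reverse direction is obstructed because there need be no continuous chain map $J_\bullet\colon \bigboxplus_x C_\bullet(G) \to \bigboxplus_x C_\bullet(S_x)$, coset representatives for $S_x$ in $G$ not being choosable continuously in $x$. The key device is to exploit the finiteness of $A$ to replace $\famS$ by a finite quotient. Concretely, I would form the set $\Ext$ of triples $(E,q,r)$ (a relative extension together with a continuous set-section $r$), build the continuous functions $\zeta$ and $\kappa$ from $r$, and invoke Proposition \ref{prop:FactoringThruQuotients} to see that $\zeta$ and $\kappa$ factor through some finite quotient $(U,\famT,Y)$. This exhibits $\Ext$ as a direct limit $\varinjlim_{(U,Y)} \Ext_{U,Y}$. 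On each stratum the offending profinite direct sum $\bigboxplus_x$ is replaced by an honest finite sum $\bigoplus_{y \in Y}$, so the chain maps $J_\bullet$ and homotopies $L_\bullet$ of Section \ref{sec:ChainHomotopies} become available coordinatewise over the finite index set $Y$. One then mimics Section \ref{sec:CochainsFromExts} to form $\eta = \Psi(\pi_U^\ast(J_1^\ast \kappa^U - L_1^\ast\zeta^U))$ and assigns the class $[\inc^\ast\eta] \in H^2(G,\famS;A)$.

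The remaining labour is bookkeeping of well-definedness, and I expect the \emph{consistency of these stratum-wise maps with the direct-limit structure} to be the main obstacle. There are four independence checks: invariance under the choice of $J_\bullet, L_\bullet$ (handled by the chain homotopies $M_\bullet$ and 2-homotopies $N_\bullet$ of Section \ref{sec:ChainHomotopies}, reproducing Equation \eqref{eq:ChoicesOfJandL}); compatibility when passing from a coarser quotient to a finer one $(V,Z) \succeq (U,Y)$, where one compares $\rho J^V_\bullet$ with $J^U_\bullet \rho$ and $\rho L^V_\bullet$ with $L^U_\bullet \rho$, and it is precisely the 2-homotopy machinery that forces the two resulting cochains $\eta^U, \eta^V$ to differ by a coboundary; invariance under the choice of section $r$, via $\upsilon([g]) = \tilde r(g)r(g)^{-1}$ after passing to a quotient over which all relevant data factor; and invariance under equivalence of relative extensions, via the conjugating function $a_\bullet\colon X \to A$ and the cochain $\chi$. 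The force of the 2-homotopy step is that it compares the chain-level lifts made at different levels of the inverse system without ever choosing coset representatives continuously---this is the crux that the finite-quotient device is designed to unlock.

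Finally I would verify that the two constructions are mutually inverse, exactly as in Section \ref{sec:EquivalenceConclusion}: from a cocycle build the extension and recover a cohomologous cocycle, and conversely reconstruct an equivalent relative extension from the data $\zeta, \kappa, \ldots, \xi$. The only modification is that each computation must be performed after descending to a suitable finite quotient $(U,\famT,Y)$ over which the relevant functions factor; the algebraic identities are identical to the discrete case. Assembling these pieces yields the asserted natural bijection.
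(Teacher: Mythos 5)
Your proposal is correct and follows essentially the same route as the paper's own proof: the forward direction transferred verbatim from Section \ref{sec:ExtFromCochains}, the direct-limit decomposition $\Ext = \varinjlim_{(U,Y)} \Ext_{U,Y}$ via Proposition \ref{prop:FactoringThruQuotients}, the definition $\eta = \Psi(\pi_U^\ast(J_1^\ast \kappa^U - L_1^\ast\zeta^U))$ over a finite quotient, and the same four well-definedness checks---including the crucial comparison of $\rho J^V_\bullet$ with $J^U_\bullet\rho$ and $\rho L^V_\bullet$ with $L^U_\bullet\rho$ via the 2-homotopy machinery of Section \ref{sec:ChainHomotopies}. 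You have correctly identified the direct-limit consistency as the substantive new difficulty and resolved it exactly as the paper does.
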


\section{Projective profinite group pairs}\label{sec:Projective}

Let $\cal C$ be a variety of finite groups, and let $\pi(\cC)$ be the set of primes involved in \cC. Let $G$ be a pro-$\cC$ group with a family of subgroups \famS\ continuously indexed over a profinite set $X$.

\begin{defn}
A {\em pro-\cC\ lifting problem of $G$ relative to \famS, with kernel $K$}, consists of the following data:
\begin{itemize}
\item a surjection of pro-\cC\ groups $\alpha\colon H\to Q$ with kernel $K$;
\item a homomorphism $\phi\colon G\to Q$; and
\item a continuous function $\sigma\colon\famS\to H$ such that each $\sigma|_{S_x}$ is a homomorphism with $\alpha\circ \sigma|_{S_x} = \phi|_{S_x}$.\end{itemize} 
A {\em solution} of this lifting problem\footnote{This is analogous to what is called a `weak solution' in \cite[Section 3.5]{RZ00}.} consists of a homomorphism $\bar \phi\colon G\to G$ such that $\alpha\circ \bar\phi = \phi$, and a continuous function $c\colon X\to K$ such that 
\[c(x)\bar\phi(s)c(x)^{-1} = \sigma(s,x)\quad \forall s\in S_x.\]

The lifting problem is {\em finite} if $H$ is finite.
\end{defn} 
\begin{defn}[cf {\cite[Section A.4]{Haran07}}]
The pair $(G,\famS)$ is a {\em projective pro-\cC\ pair} if every finite pro-\cC\ lifting problem of $G$ relative to \famS\ has a solution.
\end{defn}

First let us examine the case where $K=A$ is a finite abelian group, and thus a $Q$-module. We may consider $A$ to be a $G$-module via the map $\phi$. We may form the pullback \[E=\{(g,h)\in G\times H \mid \phi(g)=\alpha(h)\},\] an extension of $G$ by $A$. The continuous function $q\colon \famS\to E$, $q(s,x)= (s,\sigma(s,x))$ gives us a relative extension $(E,q)$. 

If our lifting problem has a solution $(\bar\phi,c)$, then we have an equivalence of extensions
\[f\colon E\to A\rtimes G,\quad f(g,h) = (h\bar\phi(g)^{-1}, g)\]
which satisfies 
\[f\circ q(s,x) = (\sigma(s,x)\bar\phi(s)^{-1},s) = (c(x),1)\star (1,s)\star (c(x),1)^{-1} \]
so that $(E,q)$ is equivalent to the trivial relative extension. Similarly, the converse holds: if the pullback is equivalent to the trivial relative extension then the lifting problem has a solution. 
\begin{prop}
A lifting problem with finite abelian kernel $A$ has a solution if and only if the associated pullback relative extension represents the zero element of $H^2(G,\famS;A)$.
\end{prop}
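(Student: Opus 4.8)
The plan is to reduce the statement to Theorem~\ref{thm:profinitecase} together with the two implications already set out in the discussion preceding the proposition. The translation is as follows: under the natural bijection of Theorem~\ref{thm:profinitecase}, the zero element of $H^2(G,\famS;A)$ corresponds exactly to the equivalence class of the trivial relative extension. This is seen by feeding the zero cocycle through the construction of Section~\ref{sec:ExtFromCochains}, where the choices $\eta=0$ and hence $\zeta=0$ produce $E=A\rtimes G$ with sections $q(s,x)=(1,s)$. Thus the assertion that ``$(E,q)$ represents the zero element of $H^2(G,\famS;A)$'' is synonymous with ``$(E,q)$ is equivalent to the trivial relative extension'', and it remains to identify the latter condition with the solvability of the lifting problem.

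The forward implication is precisely the computation displayed immediately above the statement: a solution $(\bar\phi,c)$ yields the equivalence of extensions $f(g,h)=(h\bar\phi(g)^{-1},g)$, under which $fq$ is $A$-conjugate (via $c$) to the trivial splitting, so $(E,q)$ is equivalent to the trivial relative extension.

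For the converse I would start from an equivalence of extensions $f\colon E\to A\rtimes G$ together with a continuous witness $a_\bullet\colon X\to A$ satisfying $q_0(s,x)=a_x\,f(q(s,x))\,a_x^{-1}$, and reconstruct a solution. The key step is to recover the lift $\bar\phi$: setting $\bar s = f^{-1}\circ s_0$, where $s_0\colon G\to A\rtimes G$, $g\mapsto(1,g)$ is the canonical splitting, the fact that $f$ is an equivalence of extensions makes $\bar s$ a continuous homomorphic section of $E\to G$; writing $\bar s(g)=(g,\bar\phi(g))$ then defines a continuous homomorphism $\bar\phi\colon G\to H$ with $\alpha\bar\phi=\phi$. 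Unwinding the conjugacy relation is now a direct calculation: one computes $f(q(s,x))=(\sigma(s,x)\bar\phi(s)^{-1},s)$, and substituting into $q_0(s,x)=a_x f(q(s,x))a_x^{-1}$ forces $\sigma(s,x)\bar\phi(s)^{-1}=a_x^{-1}(s\cdot a_x)$. Since the right-hand side equals $c(x)(s\cdot c(x)^{-1})$ for the choice $c(x)=a_x^{-1}$, this is exactly the relation $c(x)\bar\phi(s)c(x)^{-1}=\sigma(s,x)$; continuity of $c$ follows from that of $a_\bullet$, so $(\bar\phi,c)$ solves the lifting problem.

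The main obstacle is the converse, and within it the recovery of $\bar\phi$ as a genuine continuous homomorphism: an abstract equivalence $f$ need not have the explicit shape of the forward construction, so one cannot simply read $\bar\phi$ off a formula for $f$. The device of pulling the canonical splitting $s_0$ back along $f$ circumvents this, after which identifying $c$ with $a_\bullet^{-1}$ is a routine manipulation in the abelian group $A$.
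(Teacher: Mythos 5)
Your proposal is correct and takes essentially the same route as the paper: the paper likewise identifies the zero class with the trivial relative extension via Theorem \ref{thm:profinitecase}, and obtains the forward direction from the explicit equivalence $f(g,h)=(h\bar\phi(g)^{-1},g)$ witnessed by $c$. The converse, which the paper dispatches with ``Similarly, the converse holds,'' is exactly the reversal you spell out---recovering $\bar\phi$ as the $H$-component of $f^{-1}\circ s_0$ and taking $c(x)=a_x^{-1}$---so you have merely supplied the details the paper leaves to the reader.
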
 
\begin{clly}
The following are equivalent:
\begin{itemize}
\item Every pro-\cC\ lifting problem of $G$ relative to \famS\ with finite abelian kernel has a solution.
\item $\cd_p(G,\famS)\leq 1$ for every $p\in \pi(\cC)$.
\end{itemize}
\end{clly}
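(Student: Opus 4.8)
The plan is to deduce this corollary from the preceding Proposition together with the bijection of Theorem~\ref{thm:profinitecase}. The preceding Proposition tells us that a lifting problem with finite abelian kernel $A$ is solvable precisely when its pullback relative extension is the zero class in $H^2(G,\famS;A)$, while Theorem~\ref{thm:profinitecase} identifies $H^2(G,\famS;A)$ with the set of equivalence classes of relative extensions of $(G,\famS)$ by $A$. The two implications of the corollary then become statements about the vanishing of these groups $H^2(G,\famS;A)$, which I would control prime-by-prime.

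For the implication $(\Leftarrow)$, suppose $\cd_p(G,\famS)\leq 1$ for every $p\in\pi(\cC)$, and let a pro-$\cC$ lifting problem with finite abelian kernel $A$ be given. Since $A=\ker\alpha$ is a closed subgroup of the pro-$\cC$ group $H$, it is a finite pro-$\cC$ group, so every prime dividing $|A|$ lies in $\pi(\cC)$. The canonical primary decomposition $A=\bigoplus_{p\in\pi(\cC)}A_{(p)}$ is $G$-equivariant, and since $H^2(G,\famS;-)$ is additive on finite direct sums of coefficient modules we obtain $H^2(G,\famS;A)=\bigoplus_p H^2(G,\famS;A_{(p)})$. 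Each summand vanishes by the hypothesis $\cd_p(G,\famS)\leq 1$, so $H^2(G,\famS;A)=0$. In particular the pullback relative extension of the given lifting problem is the zero class, and the preceding Proposition then produces a solution.

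For the implication $(\Rightarrow)$, suppose every pro-$\cC$ lifting problem with finite abelian kernel is solvable, fix $p\in\pi(\cC)$, and aim to show $H^2(G,\famS;A)=0$ for every $p$-primary module $A$. Since $H^2(G,\famS;-)$ commutes with direct limits of coefficient modules---a continuous homomorphism from the compact module $\Delta$ into a discrete module has finite image, so $\Hom(\Delta,-)$ and hence the whole functor commutes with such limits---it suffices to treat finite $p$-primary $A$. Given such an $A$ and, via Theorem~\ref{thm:profinitecase}, an arbitrary relative extension $(E,q)$ of $(G,\famS)$ by $A$, I would exhibit $(E,q)$ as the pullback of the tautological lifting problem with $H=E$, $Q=G$, $\alpha=p\colon E\to G$, $\phi=\id_G$ and $\sigma=q$. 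The pullback $\{(g,h)\in G\times E: g=p(h)\}$ is canonically isomorphic to $E$ by $(g,h)\mapsto h$, carrying the section $q$ to itself, so this lifting problem has $(E,q)$ as its pullback relative extension. By hypothesis it is solvable, so by the preceding Proposition $(E,q)$ represents the zero class. As $(E,q)$ was arbitrary, $H^2(G,\famS;A)=0$; letting $A$ and $p$ range gives $\cd_p(G,\famS)\leq 1$ for all $p\in\pi(\cC)$.

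The one point that needs genuine care is the legitimacy of the tautological lifting problem in the $(\Rightarrow)$ direction: it is a \emph{pro-$\cC$} lifting problem only if $E$ is itself pro-$\cC$. Since $E$ is an extension of the pro-$\cC$ group $G$ by the finite $\pi(\cC)$-group $A$, this holds because $\cC$ is extension-closed, and I expect pinning this down (together with the trivial verification that $\sigma=q$ meets the continuity and section requirements of a lifting problem, which is immediate from the definition of a relative extension) to be the main obstacle. The primary-decomposition and direct-limit reductions are routine, so the substance of the argument is just the translation, via the preceding Proposition and Theorem~\ref{thm:profinitecase}, between solvability of lifting problems with abelian kernel and the vanishing of $H^2(G,\famS;-)$.
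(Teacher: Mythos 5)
Your proposal is correct and is essentially the argument the paper intends: the corollary is stated there without proof as an immediate consequence of the preceding Proposition together with Theorem \ref{thm:profinitecase}, and your two directions --- primary decomposition of the kernel plus additivity of $H^2(G,\famS;-)$ for ($\Leftarrow$), and the tautological lifting problem $\alpha = p\colon E\to G$, $\phi=\id_G$, $\sigma=q$ applied to an arbitrary relative extension for ($\Rightarrow$), with the direct-limit reduction from arbitrary to finite $p$-primary coefficients --- are exactly the intended filling-in of that gap. The one point you flag, that $E$ must itself be pro-\cC\ so that the tautological problem is a legitimate pro-\cC\ lifting problem, is indeed the only delicate issue and your resolution is the right one: closure of \cC\ under extensions is genuinely needed here (for a non-extension-closed variety such as the finite abelian groups the corollary actually fails, since pullbacks of pro-\cC\ lifting problems are always pro-\cC\ and so cannot realize every class in $H^2(G,\famS;A)$), and the paper implicitly assumes this property of its ``variety,'' consistently with its examples and with the results of \cite[Section 7.6]{RZ00} it invokes.
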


We now have three related concepts.
\begin{enumerate}[(I)]
\item $\cd_p(G,\famS)\leq 1$ for every $p\in \pi(\cC)$.\label{item:cd}
\item $(G,\famS)$ is a projective pro-\cC\ pair.\label{item:projective}
\item $G= \coprod_{x\in X} S_x \amalg F$ is a free pro-\cC\ product of the family \famS\ with a projective pro-\cC\ group $F$.\label{item:freeprod}
\end{enumerate}

It is clear that (\ref{item:freeprod}) implies (\ref{item:projective}), and we have just shown that (\ref{item:projective}) implies (\ref{item:cd}). When the family \famS\ is empty, all three notions are equivalent \cite[Section 7.6]{RZ00}.

We will not, in general, have (\ref{item:projective})$\Rightarrow$(\ref{item:freeprod}), for the usual reason that closed subgroups of free profinite products need not themselves be free profinite products. We also note that (\ref{item:cd}) need not imply (\ref{item:projective}).
\begin{example}
Let $\cC_{3,5}$ be the set of finite groups whose orders are divisible only by the primes $3$ and $5$, and similarly define $\cC_{2,3,5}$. Let $G$ be the free pro-$\cC_{3,5}$ product $\Z[3]\amalg_{3,5} \Z[5]$, and take $\famS=\{\Z[3],\Z[5]\}$. 

Certainly $(G,\famS)$ is a projective pro-$\cC_{3,5}$ pair, whence $\cd_p(G,\famS)\leq 1$ for $p=3,5$. Since $G$ is a pro-$\cC_{3,5}$ group, we also have $\cd_2(G,\famS)=0$. 

However $(G,\famS)$ is not a projective pro-$\cC_{2,3,5}$ pair: the lifting problem with $H={\rm Sym}_5$, $Q=1$, $\sigma(\Z[3]) = \gp{(1\, 2\, 3)}$, and $\sigma(\Z[5]) = \gp{(1\, 2\, 3\,4\, 5)}$ has no solution since ${\rm Sym}_5$ has no subgroup of order 15.
\end{example}

To restore the equivalence of (\ref{item:cd}) and (\ref{item:projective}), we will repair to the world of prosolvable groups.
\begin{prop}
Let $G$ be a pro-$\cal C$ group, where $\cal C$ is a family of solvable groups. The following statements are equivalent.
\begin{enumerate}[(1)]
\item Every pro-$\cal C$ lifting problem of $G$ relative to $\famS$ with kernel a finite abelian $\cC$-group has a solution.\label{item:fabkernel}
\item Every pro-$\cal C$ lifting problem of $G$ relative to $\famS$ with finite kernel has a solution. In particular $(G,\famS)$ is a projective pro-$\cal C$ pair.\label{item:fkernel}
\item Every pro-$\cal C$ lifting problem of $G$ relative to $\famS$ has a solution.\label{item:anykernel}
\end{enumerate}
\end{prop}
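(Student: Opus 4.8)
The plan is to run the cycle $(3)\Rightarrow(2)\Rightarrow(1)\Rightarrow(2)\Rightarrow(3)$, of which $(3)\Rightarrow(2)\Rightarrow(1)$ is immediate: a lifting problem with finite abelian $\cC$-group kernel is a special case of one with finite kernel, which is a special case of an arbitrary one. So the content lies in $(1)\Rightarrow(2)$ and $(2)\Rightarrow(3)$. The solvability hypothesis enters only in the first of these (and this is necessary, by the $\mathrm{Sym}_5$ example above, which shows $(1)\not\Rightarrow(2)$ without it).

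For $(1)\Rightarrow(2)$ I would perform a dévissage on the kernel, inducting on $|K|$. Given a lifting problem $(\alpha\colon H\to Q,\phi,\sigma)$ with finite kernel $K$, the group $K$ is a finite solvable $\cC$-group, so $K_1=[K,K]$ is a proper characteristic subgroup, hence $K_1\nsgp H$, with $K/K_1$ abelian. Factor $\alpha$ as $H\to H/K_1\to Q$. The induced problem over $H/K_1$ has abelian kernel $K/K_1$ and is solved by $(1)$, yielding $(\bar\phi_1,c_1)$; I then lift the conjugating function $c_1\colon X\to K/K_1$ to a continuous $\tilde c_1\colon X\to K$ (a continuous map from a profinite space to a finite set is locally constant) and conjugate $\sigma$ by $\tilde c_1$ to build the $\sigma$-datum of a new lifting problem $H\to H/K_1$ whose target is $\bar\phi_1$. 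Its kernel $K_1$ is smaller, so induction gives a solution $(\bar\phi,c_2)$, and $(\bar\phi,\tilde c_1 c_2)$ then solves the original problem, the conjugating functions simply multiplying within $K$. The base case $K$ abelian is precisely $(1)$.

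For $(2)\Rightarrow(3)$ I would deliberately avoid trying to assemble a solution as an inverse limit of solutions to the finite-quotient problems: the space of continuous homomorphisms $G\to H/N$ is a directed union of finite sets and is not compact, so the usual ``inverse limit of nonempty sets'' principle does not apply. Instead I would argue by Zorn's Lemma along the closed normal subgroups of $H$ contained in $K$. Order the pairs $(N,(\bar\phi_N,c_N))$, where $N\nsgp H$ is closed with $N\subseteq K$ and $(\bar\phi_N,c_N)$ solves the problem modulo $N$, by declaring $(N,\cdot)\preceq(N',\cdot)$ when $N'\subseteq N$ and the solution over $N'$ reduces to the one over $N$; the pair $(K,(\phi,1))$ lies in the poset, and a chain $\{(N_j,\cdot)\}$ has an upper bound over $N=\bigcap_j N_j$, since $H/N=\varprojlim_j H/N_j$ and $K/N=\varprojlim_j K/N_j$ let the compatible maps assemble and every defining relation passes to the limit. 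Taking a maximal element $(N^\ast,\theta^\ast)$, I would show $N^\ast=1$: otherwise pick $M\nsgp[o] H$ with $N':=M\cap N^\ast\subsetneq N^\ast$, so that $N^\ast/N'$ is finite; lifting $\theta^\ast$ through $H/N'\to H/N^\ast$ is then a lifting problem with \emph{finite} kernel $N^\ast/N'$, solvable by $(2)$, and its solution strictly dominates $(N^\ast,\theta^\ast)$, a contradiction. Hence $\theta^\ast$ solves the original problem.

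The hard part is $(2)\Rightarrow(3)$, and within it the continuity of the conjugating functions. The failure of compactness forces the Zorn (equivalently, transfinite composition-series) organisation rather than a one-shot limit; and to set up the finite-kernel sub-problem at the successor step one must lift the function $c_{N^\ast}\colon X\to K/N^\ast$ to a continuous function into $K/N'$, for which I would invoke the standard fact that a surjection of profinite groups admits a continuous (set-theoretic) section. Once these continuity issues are handled, the algebra of the dévissage is routine and parallels the abelian case already treated.
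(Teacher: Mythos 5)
Your proof is correct and follows essentially the same route as the paper's: $(1)\Rightarrow(2)$ by induction on $|K|$ using the derived subgroup $[K,K]$ (with a continuous lift of the conjugating function through a section of $K\to K/[K,K]$), and $(2)\Rightarrow(3)$ by a Zorn's Lemma d\'evissage over closed subgroups $N\leq K$ normal in $H$, with the successor step reduced to a finite-kernel problem exactly as in the paper. The only additions are cosmetic, e.g.\ your justification of why a one-shot inverse-limit argument is unavailable.
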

\begin{proof}
The implications $(3) \Rightarrow (2) \Rightarrow (1)$ are immediate. We first establish that (\ref{item:fabkernel}) implies (\ref{item:fkernel}) by induction on the order of the finite solvable group $K$.

If $K$ is abelian, we have nothing to do. Otherwise, we have the proper subgroup $K'=[K,K]$ of $K$ which is normal in $H$. Denote the quotient maps by $\alpha'\colon H/K' \to Q$ and $\beta\colon H\to H/K'$. We now have a lifting problem with data $(\alpha',\phi, \sigma')$ where  $\sigma'=\beta\sigma\colon \famS\to H/K'$. This lifting problem, which has kernel $K/K'$, has by induction a solution $\phi'\colon G\to H/K'$, $c\colon X\to K/K'$, so that 
\[\beta\sigma(s,x) = c(x)\phi'(s)c(x)^{-1}.\]
Now choose some section $r\colon K/K'\to K$ and consider the lifting problem with data $(\beta, \phi',\bar\sigma)$ where \[\bar \sigma(s,x)= rc(x)^{-1}\sigma(s,x) rc(x).\] By induction, this lifting problem has a solution $\bar\phi\colon G\to H$, $\bar c\colon X\to K$. This provides a solution to the original lifting problem $H\to Q$.

%
%
%

Now we approach the implication $(\ref{item:fkernel})\Rightarrow (\ref{item:anykernel})$ by a d\'evissage procedure. Take a pro-$\cal C$ lifting problem of $G$ relative to \famS.

Let $\cal P$ be the set of triples $(L,\eta,c)$ where $L\leq K$ is a normal subgroup of $H$ and ($\eta\colon G\to H/L$, $c\colon X\to K/L$) is a solution of the lifting problem with data $(\alpha_L, \phi, \beta_L\sigma)$ where $\alpha_L$ and $\beta_L\colon H\to H/L$ are the natural maps induced by the quotient. That is, $\alpha_L \eta = \phi$ and \[\beta_L\sigma(s,x)=c(x) \eta(s) c(x)^{-1}.\] 

If $L_2\leq L_1$ then let $\gamma\colon H/L_2\to H/L_1$ be the quotient map. We write $(L_2,\eta_2,c_2)\succeq (L_1,\eta_1,c_1)$ if $L_2\leq L_1$, $\gamma\eta_2=\eta_1$ and $\gamma c_2= c_1$. Note that $\cal P$ is a non-empty poset since $(K, \phi,1)\in\cal P$. Also, $\cal P$ is an inductive poset: for any chain $((L_n, \eta_n, c_n))_{n\in I}$ in $\cal P$ we have $(\bigcap L_n, \varprojlim \eta_n, \varprojlim c_n)\in \cal P$.

Therefore $\cal P$ has a maximal element $(L,\eta,c)$. If $L=1$, we have solved our lifting problem. Let us suppose $L\neq 1$ and derive a contradiction. If $L\neq 1$ there is an open normal subgroup $H'$ of $H$ such that $L'=H'\cap L\neq L$. Again let $\gamma\colon H/L'\to H/L$ denote the quotient. Choose some continuous section $r\colon K/L \to K/L'$.

We now have a lifting problem with data $(\gamma, \eta, \sigma')$ where $\gamma\colon H/L'\to H/L$, $\eta\colon G\to H/L$, and $\sigma'\colon \famS\to H/L'$ is defined by
\[\sigma'(s,x) = rc(x)^{-1} \beta_{L'}\sigma(s,x) rc(x).\]
Note that $\gamma\sigma'(s,x) = \eta(s)$ as required.

This lifting problem has finite kernel $L/L'$, and thus has a solution ($\eta'\colon G\to H/L'$, $c'\colon X\to L/L'$). Then $\alpha_{L'}\eta' = \phi$ and \[rc(x)c'(x) \eta'(s) (rc(x)c'(x))^{-1} = \beta_{L'}\sigma(s,x),\]
so $(L',\eta', rc \cdot c') \in \cal P$ and $(L',\eta', rc \cdot c')\succeq (L,\eta,c)$, contradicting the assumed maximality. The proof is complete.
\end{proof}

This establishes the equivalence (\ref{item:cd}) $\Leftrightarrow$ (\ref{item:projective}) for prosolvable groups. For pro-$p$ groups we can go slightly further and find projectivity in the category of all profinite groups.

\begin{prop}
Let $G$ be a pro-$p$ group. The following statements are equivalent.
\begin{enumerate}[(1)]
\item Every profinite lifting problem of $G$ relative to $\famS$ with kernel a finite abelian $p$-group has a solution.\label{item:fabkernel2}
\item Every profinite lifting problem of $G$ relative to $\famS$ with kernel a finite $p$-group has a solution.\label{item:fpkernel2}
\item Every profinite lifting problem of $G$ relative to $\famS$ with finite kernel has a solution. In particular $(G,\famS)$ is a projective profinite pair.\label{item:fkernel2}
\item Every profinite lifting problem of $G$ relative to $\famS$ has a solution. \label{item:anykernel2}
\end{enumerate}
\end{prop}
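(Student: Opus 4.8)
The plan is to prove the cycle $(1)\Rightarrow(2)\Rightarrow(3)\Rightarrow(4)$, the reverse implications $(4)\Rightarrow(3)\Rightarrow(2)\Rightarrow(1)$ being immediate since each successive class of kernels contains the previous one. Two of the three forward steps are essentially identical to the proof of the preceding proposition. For $(1)\Rightarrow(2)$ I would induct on the order of the finite $p$-group $K$: since $K$ is nilpotent, $K'=[K,K]$ is a proper characteristic subgroup, hence normal in $H$; the quotient lifting problem has abelian $p$-group kernel $K/K'$ and is solved by $(1)$, while the remaining problem has the strictly smaller $p$-group kernel $K'$ and is solved by induction, the two solutions combining exactly as in the previous proof. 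For $(3)\Rightarrow(4)$ I would run the same d\'evissage: Zorn's lemma on the poset of partial solutions $(L,\eta,c)$ with $L$ a closed normal subgroup of $H$ contained in $K$ produces a maximal element, and were its $L$ nontrivial one could choose an open normal $H'$ of $H$ with $L'=H'\cap L\subsetneq L$ and apply $(3)$ to the finite kernel $L/L'$ to enlarge it, a contradiction.

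The genuinely new content, and the step I expect to be the main obstacle, is $(2)\Rightarrow(3)$, where the hypothesis that $G$ is pro-$p$ must finally be used to pass from $p$-group kernels to arbitrary finite kernels. Given a lifting problem with finite kernel $K$, I would form the pullback extension $1\to K\to E\to G\to 1$ together with its peripheral sections $q_x\colon S_x\to E$, exactly as in the discussion preceding this proposition. Let $P$ be a Sylow pro-$p$ subgroup of $E$. Since $E/K\cong G$ is pro-$p$, the index $[E:P]$ is the $p'$-part of $|K|$; thus $P$ is open, there are only finitely many conjugates $P=P_1,\dots,P_m$, and $P\to G$ is surjective with kernel $P\cap K$ a finite $p$-group. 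Crucially, $E=KP$, so for $g=kp$ with $k\in K$, $p\in P$ one has $gPg^{-1}=kPk^{-1}$; hence every conjugate of $P$ is already a $K$-conjugate of $P$, and all Sylow pro-$p$ subgroups of $E$ lie in a single $K$-orbit.

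The heart of the argument is to conjugate the sections into $P$ continuously. Each $q_x(S_x)$ is pro-$p$, hence contained in some $P_i$, and I would verify that each set $X_i=\{x\in X: q_x(S_x)\subseteq P_i\}$ is open in $X$, its complement being the image under the proper projection $\famS\to X$ of the clopen set $q^{-1}(E\smallsetminus P_i)$, hence compact and closed. As $X$ is profinite, this finite open cover refines to a clopen partition $\{Y_j\}$; on each $Y_j$ one has $q_x(S_x)\subseteq P_{i(j)}$, and by the $K$-conjugacy of Sylow subgroups I may choose $k_j\in K$ with $k_jP_{i(j)}k_j^{-1}=P$. Setting $c(x)=k_j$ for $x\in Y_j$ yields a continuous (locally constant) map $c\colon X\to K$ with $c(x)q_x(s)c(x)^{-1}\in P$; because $c(x)$ lies in the kernel of $E\to G$, the conjugated maps $\tilde q_x=c(x)q_x(\cdot)c(x)^{-1}$ are genuine sections of $P\to G$ over $S_x$, not sections over conjugate subgroups. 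This exhibits a relative extension of $(G,\famS)$ by the finite $p$-group $P\cap K$, to which $(2)$ applies, producing a splitting $\bar\psi\colon G\to P\subseteq E$ and a continuous conjugacy $c'\colon X\to P\cap K$. Composing the conjugacies $c$ and $c'$ and projecting $E\to H$ returns a solution $(\bar\phi,c'')$ of the original problem with $c''\colon X\to K$ continuous. The obstacle throughout is precisely that the conjugating elements must be chosen both inside $K$ and continuously in $x$; both requirements are secured by the identity $E=KP$ and by the fact that a finite open cover of a profinite space refines to a clopen partition.
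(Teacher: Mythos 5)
Your proof is correct and takes essentially the same route as the paper: the reverse implications are immediate, $(1)\Rightarrow(2)$ and $(3)\Rightarrow(4)$ repeat the arguments of the prosolvable proposition, and the key step $(2)\Rightarrow(3)$ is handled exactly as in the paper---pass to a Sylow pro-$p$ subgroup $P$, use surjectivity onto the pro-$p$ quotient (equivalently $E=KP$) to see that all Sylow pro-$p$ subgroups are $K$-conjugate, conjugate the peripheral sections into $P$ by a continuous map $c\colon X\to K$ obtained from a compactness/clopen-partition argument, and then apply $(2)$ to the resulting pro-$p$ lifting problem with kernel $P\cap K$. The only cosmetic difference is that you work inside the pullback $E$ while the paper works inside $\alpha^{-1}(Q')$ for a Sylow pro-$p$ subgroup $Q'$ of $Q$ containing $\phi(G)$, and your write-up usefully spells out the ``standard compactness argument'' that the paper leaves implicit.
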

\begin{proof}
The implications $(4)\Rightarrow (3) \Rightarrow (2) \Rightarrow (1)$ are immediate. The implications  (\ref{item:fabkernel2}) $\Rightarrow$ (\ref{item:fpkernel2}) and $(\ref{item:fkernel2})\Rightarrow (\ref{item:anykernel2})$ are identical to the analogous implications in the last proposition.

It remains to note the implication (\ref{item:fpkernel2}) $\Rightarrow$ (\ref{item:fkernel2}). 

Take some lifting problem $(\alpha\colon H\to Q$, $\phi\colon G\to Q$, $\sigma\colon \famS\to H)$ with finite kernel $K$. Let $Q'$ be a pro-$p$ Sylow subgroup of $H$ which contains $\phi(G)$, and take $H'=\alpha^{-1}(P)$. Fix one pro-$p$ Sylow subgroup $P$ of $H'$ and note that $\alpha(P) = Q'$.

Each pro-$p$ subgroup $\sigma(S_x)$ is contained in one of the finitely many pro-$p$ Sylow subgroups of $H'$, and thus is conjugate by $K$ into $P$. By a standard compactness argument we may find a continuous function $c\colon X\to K$ such that $c(x)\sigma(S_x)c(x)^{-1}\leq P$ for all $x\in X$.

We have now replaced our original lifting problem with a lifting problem $P\to Q'$ solely in the category of pro-$p$ groups, replacing $\sigma$ with its conjugate by $c$. By assumption this lifting problem has a solution, which immediately gives a solution of our original lifting problem.
\end{proof}

We conclude with the equivalence (\ref{item:projective}) $\Leftrightarrow$ (\ref{item:freeprod}) for pro-$p$ groups.

\begin{prop}
Let $G$ be a pro-$p$ group such that $\cd_p(G,\famS)\leq 1$. Then there exists a free pro-$p$ group $F$ such that $G$ is a free pro-$p$ product
\[G= \coprod_{x\in X}S_x \amalg F.\]
\end{prop}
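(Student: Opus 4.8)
The plan is to manufacture a free pro-$p$ product $\Gamma=\coprod_{x\in X}S_x\amalg F$ equipped with a continuous epimorphism $\psi\colon\Gamma\to G$ that restricts to the inclusion on each free factor $S_x$, and then to prove that $\psi$ is an isomorphism. Surjectivity of $\psi$ and the existence of a section will be cheap; the one essential input is that $H^2(G,\famS;\F_p)=0$, which will force $\psi$ to be injective on Frattini quotients.

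First I would construct $F$ and $\psi$. Write $\overline G = G/\overline{G^p[G,G]}$ for the Frattini quotient, a profinite $\F_p$-module, and let $P\subseteq\overline G$ be the image of the peripheral map $\bigboxplus_{x\in X}\overline{S_x}\to\overline G$; this image is compact, hence closed. Choose a closed complement $C$ of $P$ in $\overline G$. As a profinite $\F_p$-module, $C\cong\Fpof{T}$ for some profinite space $T$, and a continuous basis $T\to C$ lifts to a continuous map $T\to G$. Let $F$ be the free pro-$p$ group on $T$, and let $\psi\colon\Gamma=\coprod_{x\in X}S_x\amalg F\to G$ be the map induced by the inclusions $S_x\hookrightarrow G$ and by this map on $T$. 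By construction the induced map $\overline\psi\colon\overline\Gamma\to\overline G$ is surjective, so $\psi$ is surjective by the Frattini argument for pro-$p$ groups.

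Second, I would invoke projectivity. Since $\cd_p(G,\famS)\le 1$, the earlier results of this section show that $(G,\famS)$ is a projective pro-$p$ pair and that every profinite lifting problem of $G$ relative to \famS\ has a solution. I apply this to the lifting problem $(\psi\colon\Gamma\to G,\ \id_G,\ \sigma)$, in which $\sigma\colon\famS\to\Gamma$ is the canonical inclusion of the family of free factors. A solution provides a continuous section $\theta\colon G\to\Gamma$ with $\psi\theta=\id_G$, together with a continuous $c\colon X\to N:=\ker\psi$ satisfying $\theta(s)=c(x)^{-1}\sigma(s,x)c(x)$ for all $s\in S_x$. In particular $\theta$ is injective and $\Gamma = N\rtimes\theta(G)$.

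Finally I would show $\overline\psi$ is injective. Decompose $\overline\Gamma = P'\oplus\overline F$ with $P'=\bigboxplus_{x\in X}\overline{S_x}$ (the Frattini quotient of a free pro-$p$ product being the profinite direct sum of those of its factors); the minimal choice of $T$ makes $\overline\psi|_{\overline F}$ an isomorphism onto $C$, so a short computation gives $\ker\overline\psi=\ker(\overline\psi|_{P'})$. Over $\F_p$ this peripheral map is injective precisely when its dual, the restriction $\res\colon H^1(G;\F_p)\to H^1(\famS;\F_p)$ (the continuous family of the $H^1(S_x;\F_p)$), is surjective; and the long exact sequence of the pair together with $H^2(G,\famS;\F_p)=0$ yields exactly this surjectivity, since $\res$ has cokernel injecting into $H^2(G,\famS;\F_p)$. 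Hence $\overline\psi$ is an isomorphism, so $N\subseteq\overline{\Gamma^p[\Gamma,\Gamma]}$; combined with $\Gamma=N\,\theta(G)$, the Frattini argument forces $\theta(G)=\Gamma$ and $N=1$. Thus $\psi$ is an isomorphism carrying each free factor $S_x$ onto $S_x$, exhibiting $G=\coprod_{x\in X}S_x\amalg F$.

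I expect the main obstacle to be the profinite bookkeeping in the first step: choosing the complement $C$ and the continuous basis $T$ over the profinite space $X$ so that $F$ and $\psi$ are genuinely continuous, and correctly identifying the peripheral Frattini map $\bigboxplus_{x}\overline{S_x}\to\overline G$ as dual to $\res$ in the profinite-direct-sum setting. Once these identifications are in place, the cohomological input---surjectivity of $\res$ from the vanishing of $H^2(G,\famS;\F_p)$---is immediate.
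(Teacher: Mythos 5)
Your proposal is correct and is essentially the paper's own proof: you build $F$ so that the natural map $\coprod_{x\in X} S_x \amalg F \to G$ induces an isomorphism on Frattini quotients (the paper does exactly this on the Pontryagin-dual side, taking $F$ free on an index set $I$ with $\ker(H^1(G;\F_p)\to H^1(\famS;\F_p))\cong\bigoplus_I\F_p$, and using $H_2(G,\famS;\F_p)=0$ where you use surjectivity of restriction from the cohomology exact sequence), deduce surjectivity by the Frattini argument, split the surjection via the lifting-problem/projectivity results established earlier in the section, and conclude that both maps are isomorphisms. The differences are cosmetic: compact (homological) rather than discrete (cohomological) bookkeeping in constructing $F$, and a kernel-inside-Frattini-subgroup endgame in place of the paper's observation that two surjections whose composite is the identity are both isomorphisms.
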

\begin{proof}
In this proof all homology and cohomology groups have $\F_p$ coordinates. 

The discrete elementary abelian $p$-group $\ker (H^1(G) \to H^1(\famS))$ is isomorphic to $\bigoplus_I \F_p$ for some indexing set $I$ \cite[Remark 7.7.1]{RZ00}. Let $F$ be the free pro-$p$ group on the set $I$ converging to 1. Since $H^1(G)$ is dual to $H_1(G)$, we have $H_1(G)/\im H_1(\famS) \iso H_1(F)$. Note that $H_2(G,\famS)=0$ so $H_1(\famS)$ injects into $H_1(G)$. Note also that $H_1(\famS; \F_p) = \bigboxplus_{x\in X} H_1(S_x; \F_p)$ from, for example, \cite[Theorem 9.4.3(b)]{RZ00}.

Since $F$ is free we may lift $F\to H_1(F)$ to a map $F\to G$ such that the natural map $f\colon \coprod S_x \amalg F \to G$ is an isomorphism on $H_1$, and therefore a surjection. Take a solution $\bar\phi\colon G\to \coprod S_x \amalg F$ of the lifting problem defined by $\id\colon G\to G$. Since $\bar\phi$ is also an isomorphism on $H_1$, it is a surjection; therefore both $f$ and $\bar\phi$ are isomorphisms as required.
\end{proof}
\begin{theorem}\label{thm:cd1isproj} Let $(G,\famS)$ be a pro-$p$ group pair, where $\famS=\{S_x\}_{x\in X}$ is a family of subgroups continuously indexed over the profinite set $X$. The following are equivalent.
\begin{enumerate}[(I)]
\item $\cd_p(G,\famS)\leq 1$.
\item $(G,\famS)$ is a projective profinite pair.
\item $G= \coprod_{x\in X} S_x \amalg F$ is a free profinite product of the family \famS\ with a free pro-$p$ group $F$.
\end{enumerate}
\end{theorem}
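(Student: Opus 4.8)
The plan is to assemble Theorem~\ref{thm:cd1isproj} from the pieces already established for the three conditions, proving the cycle of implications $(\mathrm{III})\Rightarrow(\mathrm{II})\Rightarrow(\mathrm{I})\Rightarrow(\mathrm{III})$. The implication $(\mathrm{III})\Rightarrow(\mathrm{II})$ is essentially formal: a solution to a lifting problem amounts to splitting a relative extension, and since a free pro-$p$ product inherits the universal mapping property of its free factors, any lifting problem can be solved factor-by-factor (on each $S_x$ via the given $\sigma$, on $F$ by freeness) and then assembled. For $(\mathrm{II})\Rightarrow(\mathrm{I})$ I would simply cite the Corollary proved earlier in this section: projectivity gives a solution to every finite lifting problem, in particular those with finite abelian kernel, which by the Proposition preceding the Corollary is exactly the vanishing of the obstruction class in $H^2(G,\famS;A)$ for all finite $p$-primary $A$, i.e.\ $\cd_p(G,\famS)\leq 1$.

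The substantive content is $(\mathrm{I})\Rightarrow(\mathrm{III})$, and this is precisely the final Proposition stated just before the theorem. I would therefore invoke that Proposition directly to produce the free pro-$p$ group $F$ and the free product decomposition $G=\coprod_{x\in X}S_x\amalg F$. To make the logic airtight I should first confirm that condition~$(\mathrm{I})$ feeds correctly into that Proposition: the Proposition takes $\cd_p(G,\famS)\leq1$ as hypothesis, which is exactly~$(\mathrm{I})$, so no extra bookkeeping is needed beyond noting that a free pro-$p$ group is projective, so that the $F$ produced is of the required type and $(\mathrm{III})$ is genuinely satisfied.

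The step I expect to be the main obstacle is $(\mathrm{I})\Rightarrow(\mathrm{III})$, but since its proof is the preceding Proposition, the real work has been discharged there; within the theorem itself the delicate point is ensuring the three conditions are stated with compatible hypotheses on the kernel. Specifically, condition~$(\mathrm{II})$ refers to \emph{all} finite profinite lifting problems, whereas the Corollary only directly controls abelian kernels; bridging this requires the Proposition on pro-$p$ groups established above, which shows that for pro-$p$ $G$ the existence of solutions for finite abelian $p$-kernels is equivalent to projectivity in the full profinite category. I would therefore route $(\mathrm{II})\Rightarrow(\mathrm{I})$ through that equivalence rather than the Corollary alone, so that the notion of ``projective profinite pair'' in~$(\mathrm{II})$ matches the cohomological condition in~$(\mathrm{I})$ without any gap in the class of permitted kernels.

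Thus the proof reduces to citation and assembly: $(\mathrm{III})\Rightarrow(\mathrm{II})$ by the universal property of free products together with the lifting data $\sigma$; $(\mathrm{II})\Rightarrow(\mathrm{I})$ by the pro-$p$ equivalence of projectivity with solvability of abelian-kernel problems and hence with $\cd_p\leq1$; and $(\mathrm{I})\Rightarrow(\mathrm{III})$ by the final Proposition. The only care needed is to verify that the free pro-$p$ group $F$ appearing in the last Proposition matches condition~$(\mathrm{III})$ verbatim, which it does by construction.
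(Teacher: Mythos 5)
Your proposal is correct and is essentially the paper's own argument: the paper gives no separate proof of the theorem, assembling it exactly as you do from the preceding results ((III)$\Rightarrow$(II) formally via the universal property, (II)$\Rightarrow$(I) via the abelian-kernel Proposition and its Corollary, and (I)$\Rightarrow$(III) via the final Proposition, whose own proof already encapsulates the pro-$p$ upgrade from abelian kernels to arbitrary kernels). Your extra care about routing (II)$\Rightarrow$(I) through the pro-$p$ equivalence is harmless but unnecessary, since projectivity trivially includes the abelian-kernel case that the Corollary needs.
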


Since cohomological dimension is inherited by closed subgroups, this theorem (or more precisely the equivalence (I) $\Leftrightarrow$ (III)) provides a new proof of the Kurosh Subgroup Theorem for pro-$p$ groups, which avoids the use of profinite trees. This theorem is originally due to Melnikov \cite{Melnikov90} and Haran \cite{Haran87} independently.

\begin{theorem}[Kurosh Subgroup Theorem]
Let $G= \coprod_{x\in X} S_x \amalg F$ for some free pro-$p$ group $F$, where $\famS = \{S_x\colon x\in X\}$ is a continuously indexed family of subgroups of $G$. 

Let $H\leq G$ be a closed subgroup. Suppose there is a continuous section $r\colon H\lqt G/\famS \to G/\famS$ of the quotient map $G/\famS \to H\lqt G/\famS$ and let $\famS^H$ be the family of subgroups 
\[\famS^H = \{H\cap r(y)S_xr(y)^{-1} \mid x\in X, y\in H\lqt G/S_x\}\]
continuously indexed over the profinite set 
\[H\lqt G/\famS = \coprod_{x\in X} H\lqt G/S_x.\]
Then there is a free pro-$p$ group $F'$ such that 
\[H = \coprod_{H\lqt G/\famS} \left(H\cap r(y)S_xr(y)^{-1}\right) \amalg F'. \]
\end{theorem}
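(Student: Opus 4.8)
The plan is to deduce the theorem from the equivalence (I) $\Leftrightarrow$ (III) of Theorem~\ref{thm:cd1isproj}, applied twice, with a cohomological-dimension inheritance argument in between. First I would note that, since $G=\coprod_{x\in X}S_x\amalg F$ is a free pro-$p$ product of $\famS$ with a free group $F$, the pair $(G,\famS)$ satisfies condition (III); hence by the implication (III) $\Rightarrow$ (I) it satisfies $\cd_p(G,\famS)\leq 1$. The entire proof then rests on transferring this bound to the subgroup pair $(H,\famS^H)$: once I know $\cd_p(H,\famS^H)\leq 1$, the implication (I) $\Rightarrow$ (III) applied to $(H,\famS^H)$ produces a free pro-$p$ group $F'$ and a decomposition $H=\coprod_{H\lqt G/\famS}(H\cap r(y)S_xr(y)^{-1})\amalg F'$, which is exactly the claimed conclusion. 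So everything hinges on the inheritance of relative cohomological dimension by the closed subgroup $H$, together with the verification that $\famS^H$ is the \emph{correct} induced peripheral family.

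The key structural input is a module identity obtained by restricting the $G$-module $\Zpiof{G/\famS}$ to $H$. I would decompose $G/\famS=\bigsqcup_{x\in X}G/S_x$ into $H$-orbits: the $H$-orbits on $G/S_x$ are parametrised by the double cosets $H\lqt G/S_x$, and the orbit through $gS_x$ is $H$-isomorphic to $H/(H\cap gS_xg^{-1})$. Using the continuous section $r$ to select representatives $g=r(y)$, and the profinite direct sum relation \eqref{eq:profdirectsum} for the surjection $G/\famS\to H\lqt G/\famS$, I obtain an isomorphism of $\Zpiof{H}$-modules
\[\Zpiof{G/\famS}|_H\;\cong\;\Zpiof{H/\famS^H}\]
compatible with the evaluation maps, and hence a restriction identity $\Delta_{G/\famS}|_H\cong\Delta_{H/\famS^H}$ for the kernels. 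This is precisely what singles out $\famS^H$, continuously indexed over the profinite double-coset space $H\lqt G/\famS$, as the right induced family: it makes the defining module of the pair $(H,\famS^H)$ the restriction of the defining module of $(G,\famS)$.

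With this identity in hand, the inheritance of cohomological dimension reduces to the statement that relative cohomological dimension does not increase on passing to closed subgroups. The cleanest mechanism is to reformulate the bound module-theoretically: $\cd_p(G,\famS)\leq 1$ is equivalent to $\Delta_{G/\famS}$ being a projective $\Zpiof{G}$-module (the relative analogue of the characterisation of projective profinite groups through the augmentation ideal). Since $\Zpiof{G}$ is free, hence projective, as a $\Zpiof{H}$-module, the restriction $\Delta_{G/\famS}|_H=\Delta_{H/\famS^H}$ is again projective over $\Zpiof{H}$. Reading the definition backwards, $H^2(H,\famS^H;A)=H^1(H;\Hom(\Delta_{H/\famS^H},A))$ then vanishes for every finite $p$-primary $A$, so $\cd_p(H,\famS^H)\leq 1$, and feeding this into (I) $\Rightarrow$ (III) completes the argument.

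The main obstacle I expect lies in the second paragraph: verifying the orbit decomposition \emph{continuously} rather than orbit-by-orbit. One must check that $H\lqt G/\famS$ is genuinely profinite, that $\famS^H$ is continuously indexed over it, and that the section $r$ assembles the stabiliser identifications $H/(H\cap r(y)S_xr(y)^{-1})$ into an isomorphism of sheaves over $H\lqt G/\famS$, so that the profinite direct sum $\Zpiof{H/\famS^H}$ really is the restricted module. The continuity hypothesis on $r$ is exactly what is needed here, and it is the only point at which that hypothesis is used. A secondary technical point, in the third paragraph, is to pin down the reformulation of $\cd_p(G,\famS)\leq 1$ as projectivity of $\Delta_{G/\famS}$, which is what supplies a restriction-friendly form of the dimension bound; this requires the homological duality machinery but is formal once established.
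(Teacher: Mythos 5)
Your proposal is correct and takes essentially the same route as the paper: the paper's proof consists precisely of applying the equivalence (I) $\Leftrightarrow$ (III) of Theorem \ref{thm:cd1isproj} to $(G,\famS)$ and then to $(H,\famS^H)$, citing inheritance of relative cohomological dimension by closed subgroups for the middle step. Your second and third paragraphs (the $H$-orbit decomposition of $G/\famS$ giving $\Delta_{G/\famS}|_H\cong\Delta_{H/\famS^H}$, and the projectivity characterisation of $\cd_p\leq 1$) simply supply the details of that inheritance step, which the paper leaves implicit.
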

\begin{rmk}
The usual statement of the theorem includes a condition that $H$ is second-countable; the purpose of this condition is to force the existence of the section $r$. See \cite[Theorem 9.6.1(b) and Theorem 9.6.2(a)]{Ribes17}.
\end{rmk}

\bibliographystyle{alpha}
\bibliography{RelCoh.bib} 

\begin{thebibliography}{HJP07}

\bibitem[Aus54]{auslander1954relative}
Maurice Auslander.
\newblock {\em Relative cohomology theory of groups and continuations of
  homomorphisms}.
\newblock PhD thesis, Columbia University, 1954.

\bibitem[BE78]{BE77}
Robert Bieri and Beno Eckmann.
\newblock {Relative homology and Poincar{\'{e}} duality for group pairs}.
\newblock {\em {Journal of Pure and Applied Algebra}}, 13(3):277--319, 1978.

\bibitem[Har87]{Haran87}
Dan Haran.
\newblock On closed subgroups of free products of profinite groups.
\newblock {\em Proceedings of the London Mathematical Society}, 3(2):266--298,
  1987.

\bibitem[HJP07]{Haran07}
Dan Haran, Moshe Jarden, and Florian Pop.
\newblock {\em Projective group structures as absolute Galois structures with
  block approximation}.
\newblock American Mathematical Soc., 2007.

\bibitem[HZ22]{HZ22}
Dan Haran and Pavel Zalesskii.
\newblock {Relatively projective pro-$p$ groups}.
\newblock {\em {arXiv preprint arXiv:2208.12982}}, 2022.

\bibitem[Koc14]{kochloukova2014pro}
DH~Kochloukova.
\newblock Pro-$\mathcal{C}$ completions of orientable {PD}${}^3$-pairs.
\newblock {\em Monatshefte f{\"u}r Mathematik}, 175(3):367--384, 2014.

\bibitem[Mel90]{Melnikov90}
O.~V. Mel’nikov.
\newblock Subgroups and homology of free products of profinite groups.
\newblock {\em Mathematics of The Ussr-izvestiya}, 34:97--119, 1990.

\bibitem[Rib69]{Ribes69}
Luis Ribes.
\newblock {On a cohomology theory for pairs of groups}.
\newblock {\em {Proceedings of the American Mathematical Society}},
  21(1):230--234, 1969.

\bibitem[Rib74]{Ribes74}
Luis Ribes.
\newblock Cohomological characterization of amalgamated products of groups.
\newblock {\em Journal of Pure and Applied Algebra}, 4(3):309--317, 1974.

\bibitem[Rib17]{Ribes17}
Luis Ribes.
\newblock {\em {Profinite Graphs and Groups}}, volume~66 of {\em {A Series of
  Modern Surveys in Mathematics}}.
\newblock {Springer}, 2017.

\bibitem[RZ00]{RZ00}
Luis Ribes and Pavel Zalesskii.
\newblock {\em {Profinite groups}}.
\newblock {Springer}, 2000.

\bibitem[Ser13]{Serre13}
Jean-Pierre Serre.
\newblock {\em {Galois cohomology}}.
\newblock {Springer Science \& Business Media}, 2013.

\bibitem[Sta68]{Stallings68}
John~R Stallings.
\newblock On torsion-free groups with infinitely many ends.
\newblock {\em Annals of Mathematics}, pages 312--334, 1968.

\bibitem[SW00]{SW00}
Peter Symonds and Thomas Weigel.
\newblock {Cohomology of $p$-adic analytic groups}.
\newblock In {\em {New horizons in pro-$p$ groups}}, volume 184, pages
  347--408. {Birkh{\"{a}}user Boston}, 2000.

\bibitem[Swa69]{Swan69}
Richard~G Swan.
\newblock Groups of cohomological dimension one.
\newblock {\em Journal of Algebra}, 12(4):585--610, 1969.

\bibitem[Tak59]{takasu1959relative}
Satoru Takasu.
\newblock Relative homology and relative cohomology theory of groups.
\newblock {\em J. Fac. Sci. Univ. Tokyo. Sect. I}, 8(75-110):1--2, 1959.

\bibitem[Wal71]{Wall71}
CTC Wall.
\newblock Pairs of relative cohomological dimension one.
\newblock {\em Journal of Pure and Applied Algebra}, 1(2):141--154, 1971.

\bibitem[Wil19]{WilkesRelCoh}
Gareth Wilkes.
\newblock Relative cohomology theory for profinite groups.
\newblock {\em Journal of Pure and Applied Algebra}, 223(4):1617--1688, 2019.

\bibitem[Zal22]{Zal22up}
Pavel Zalesskii.
\newblock Homological characterization of relatively projective profinite
  groups.
\newblock 2022.

\end{thebibliography}
\end{document}